\documentclass[11pt]{article}

\usepackage{fullpage}
\usepackage{amsmath, amsthm, amsfonts, amssymb, amstext, mathrsfs, enumerate}
\usepackage{graphicx, ragged2e, lscape, framed, xcolor}
\usepackage{subfiles}
\usepackage{microtype}

\theoremstyle{plain}
\newtheorem{theorem}{Theorem}[section]
\newtheorem{lemma}[theorem]{Lemma}

\newtheorem{claim}{Claim}[section]

\numberwithin{equation}{section}
\allowdisplaybreaks

\newcommand{\affl}[3]{%
    \par\noindent #1\\*
    {\small Email: {\ttfamily #2}}\\*
    \textsc{#3}\par\smallskip
}

\usepackage[pagebackref]{hyperref}
\hypersetup{
    colorlinks=true,
    urlcolor=blue,
    linkcolor=purple,
    citecolor=blue,
    pdftitle={Supersaturation: Cliques and Positive Square Energy},
    pdfauthor={Abhay Jayarajan, M. Rajesh Kannan, Shivaramakrishna Pragada},
}

\DeclareMathOperator{\tr}{tr}

\newcommand{\one}{\mathbf 1}

\newcommand{\norm}[1]{\left\lVert #1\right\rVert}
\newcommand{\ip}[2]{\left\langle #1,#2\right\rangle}

\newcommand{\cP}{\mathcal{P}}

\title{\textbf{Supersaturation: Cliques and Positive Square Energy}}
\author{Abhay Jayarajan, M. Rajesh Kannan, Shivaramakrishna Pragada}
\date{\today}

\begin{document}
\maketitle

\begin{abstract}

Let $G$ be a simple graph of order $n$ and let $k_r$ denote the number of unlabeled copies of $K_r$ in $G$. Let $s^+$ denote the sum of the squares of the positive adjacency eigenvalues of $G$. For every fixed integer $r\geq2$, we prove that, as $n \to \infty$
\[
    \sqrt{s^+}>\left(1-\frac1r\right)n
    \quad\Longrightarrow\quad
    k_{r+1}\geq
    \left(\frac1{r^{r-1}}-o_r(1)\right)n^{r-1}.
\]
The coefficient $r^{-(r-1)}$ is asymptotically best possible; in particular, the triangle case gives $k_3\geq n/2-o(n)$. The proof combines positive square energy stability, the clique removal lemma, and a sharp clique count around a maximum $r$-cut. The triangle case requires an additional block-matrix argument.
\end{abstract}

\noindent
\textbf{Keywords:} Spectral supersaturation, adjacency eigenvalues, cliques, positive square energy.

\noindent
\textbf{MSC2020:} 05C35, 05C50, 15A42

\section{Introduction}\label{sec:introduction}

\subsection{Background}
Extremal graph theory asks how large a graph can be while avoiding a prescribed subgraph. Supersaturation asks what happens immediately after this extremal threshold is crossed: how many copies of the forbidden graph must then appear? Even the addition of a single edge can force a number of copies growing polynomially with the order of the graph. A central problem is to identify both the order of growth of this increment and its sharp leading coefficient. In this paper, we study this question when the extremal threshold is measured by the combined contribution of all positive adjacency eigenvalues.

Throughout, $G$ is a finite simple graph of order $n$ and size $m$. Let $V(G)$ and $E(G)$ denote the vertex and edge sets of $G$ respectively. The adjacency matrix of $G$ is denoted by $A=A(G)$, and its adjacency eigenvalues are ordered as
\[
    \lambda_1(G)\geq\lambda_2(G)\geq\cdots\geq\lambda_n(G).
\]
For an integer $j\ge2$, let $k_j(G)$ denote the number of unlabeled copies of $K_j$ in $G$. 
We write
\[
    s^+(G)=\sum_{\lambda_i(G)>0}\lambda_i^2(G),
    \qquad
    s^-(G)=\sum_{\lambda_i(G)<0}\lambda_i^2(G).
\]
Thus $s^+(G)+s^-(G)=2m$.
 
Let $T_r(n)$ be the complete $r$-partite graph of order $n$ whose part sizes differ by at most one, and let $t_r(n)=|E(T_r(n))|$. For a vertex $v$ let $N_G(v)$ denote the set of neighbors of $v$ in $G$. Write $d_G(v)= |N_G(v)|$. For disjoint vertex sets $U,W\subseteq V(G)$, let $E_G(U,W)$ be the set of edges of $G$ between $U$ and $W$, and put $e_G(U,W)=|E_G(U,W)|$. We write $d_G(v,U)=|N_G(v)\cap U|$. When the underlying graph is clear from context, we omit it from the notation and write, for example, $k_j$, $s^+$, $s^-$, $N(v)$, $d(v)$, and $e(U,W)$. Unless otherwise stated, asymptotic statements are understood
along sequences of graphs whose orders tend to infinity, with $r$ and any specified positive constants fixed.

\textbf{Background on edge supersaturation: }The classical starting point in extremal graph theory is the theorem of Mantel~\cite{Mantel1907} and its extension by Tur\'an~\cite{Turan1941}: every $K_{r+1}$-free graph on $n$ vertices has at most $t_r(n)$ edges, with equality precisely for $T_r(n)$. Rademacher's theorem, recorded by Erd\H{o}s~\cite{Erdos1955}, shows that one edge above Mantel's threshold forces at least $\lfloor n/2\rfloor$ triangles. Subsequent work of Erd\H{o}s~\cite{Erdos1962Rademacher,Erdos1969Cliques} extended this phenomenon to more added edges and to larger cliques. 
Lov\'asz and Simonovits~\cite{LovaszSimonovits1983} determined the minimum number of $(r+1)$-cliques for a wide range of edge counts just above $t_r(n)$, showing that this minimum grows in proportion
to the number of extra edges.

Erd\H{o}s and Simonovits~\cite{ErdosSimonovits1983} proved a general supersaturation theorem, showing that having a fixed positive multiple of $n^2$ additional edges beyond the maximum allowed in an $F$-free graph forces many copies of $F$. For cliques, the problem of asymptotically determining the smallest possible number of copies for given numbers of vertices and edges led to the results of Razborov~\cite{Razborov2008Triangles} for triangles and Nikiforov~\cite{Nikiforov2011Cliques} for triangles and $4$-cliques, and culminated in Reiher's theorem for cliques of arbitrary size~\cite{Reiher2016CliqueDensity}. Closer to the extremal threshold, Mubayi~\cite{Mubayi2010} established a counting theorem for every color-critical graph $F$, that is, a graph containing an edge whose deletion lowers its chromatic number. Pikhurko and Yilma~\cite{PikhurkoYilma2017Supersaturation} refined this theory and studied the structure of minimizing graphs for color-critical graphs. More recent work of Ma and Yuan~\cite{MaYuan2025Supersaturation} extends the investigation beyond the color-critical setting. Zhang, Liu, Hong, and Hu~\cite{ZhangLiuHongHu2026} established analogous supersaturation results for certain uniform hypergraph expansions of color-critical graphs.

\textbf{Background on spectral (radius) supersaturation: }Spectral supersaturation replaces an edge-count assumption by a condition on the adjacency spectral radius $\lambda_1(G)$. Wilf's inequality~\cite{Wilf1986} gives
\[
  \lambda_1(G)\le \left(1-\frac 1r \right) n
  \qquad\text{whenever $G$ is $K_{r+1}$-free}.
\]
Since $\lambda_1(G)\ge2m/n$, spectral conditions can apply to graphs whose edge count alone does not cross the corresponding extremal threshold. Bollob\'as and Nikiforov~\cite[Theorem~2]{BollobasNikiforov2007Cliques} obtained the quantitative bound
\[
 k_{r+1}(G)\ge
 \left(\frac{\lambda_1(G)}{n}-\left(1-\frac 1r \right)\right)
 \frac{r(r-1)}{r+1}\left(\frac{n}{r}\right)^{r+1}
\]
for $r \geq 2$. This guarantees at least a fixed positive multiple of $n^{r+1}$ copies of $K_{r+1}$ when the spectral radius exceeds $\left(1-\frac1r\right)n$ by a fixed positive multiple of $n$. However, when the spectral radius is only just above the threshold, this bound alone need not guarantee as many cliques as adding one edge to a Turán graph would create. Nikiforov's spectral Erd\H{o}s--Stone-Bollob\'as theorem~\cite{Nikiforov2009SpectralESB} and spectral stability theorem~\cite{Nikiforov2009Stability} provide further connections between spectral excess, large multipartite subgraphs, and proximity to a Tur\'an graph.

The problem of determining how many triangles must occur when the
threshold is exceeded only slightly has received particular attention. Ning and Zhai \cite[Theorem~7]{NingZhai2023Triangles} proved that
\[
 \lambda_1(G)\ge\lambda_1(T_2(n))
 \quad\Longrightarrow\quad
 k_3(G)\ge\left\lfloor\frac n2\right\rfloor-1,
\]
unless $G=T_2(n)$. Let $Y_{n,2,p}$ be obtained from $T_2(n)$ by adding a matching of $p$ edges inside a largest part. Li, Lu, and Peng~\cite[Theorem~2.2]{LiLuPeng2024SpectralER} established the sharp spectral Erd\H{o}s--Rademacher theorem: every $n$-vertex graph $G$ satisfying $\lambda_1(G)\ge\lambda_1(Y_{n,2,1})$ contains at least $\lfloor n/2\rfloor$ triangles. Li, Feng, and Peng~\cite{LiFengPeng2024SpectralLS} subsequently proved that, for $1\le p\le\sqrt n/11$,
the condition $\lambda_1(G)\ge\lambda_1(Y_{n,2,p})$ guarantees at least $p\lfloor n/2\rfloor$ triangles. The broader problem of spectral bounds of subgraph counts for color-critical graphs was also highlighted in the survey of Liu and Ning~\cite[Section~9]{LiuNing2023Unsolved}.

Fang, Li, Lin, and Ma~\cite{FangLiLinMa2025ColorCritical} proved spectral supersaturation theorems for all color-critical graphs, using the minimum and maximum spectral radii among graphs obtained by adding $q$ edges to $T_r(n)$ as thresholds.
More recently, Wu and Lu~\cite{WuLu2026Strict} determined the largest possible spectral radius when the number of cliques is required to be below, or at most, a prescribed bound, for $r\ge3$ and cases just above the Tur\'an threshold. They also obtained sharp thresholds using Tur\'an graphs with added pairwise disjoint edges. Chen and Li~\cite{ChenLi2026Optimal} determined asymptotically optimal ranges for the number of added edges in such comparisons, for cliques, including triangles, and for odd cycles. Together, these results give sharp lower bounds on subgraph counts under conditions on the largest adjacency eigenvalue. Besides counting triangles, one may also ask how many edges lie in triangles and how the triangles overlap. Li, Feng, and Peng studied the number of edges lying in triangles~\cite{LiFengPeng2025EFR}, as well as triangle configurations and bowties~\cite{LiFengPeng2025Bowties}, where a bowtie consists of two triangles sharing exactly one vertex.

The results above express spectral thresholds in terms of the number of vertices. A complementary approach uses the number of edges instead. This distinction is natural: adding isolated vertices changes $n$, but leaves $m$, the spectral radius, and the numbers of triangles and larger cliques unchanged. Thresholds expressed in terms of $m$ are therefore unaffected by such additions.
Li, Liu, and Zhang~\cite{li2025edgespectralerdhosstonesimonovitstheoremstability} established general bounds on the spectral radius of graphs avoiding a fixed non-bipartite graph, and described the structure of graphs close to attaining these bounds. They also obtained exact bounds for color-critical graphs of chromatic number at least four~\cite{li2026edgespectralturantheoremscolorcritical}.

For triangles, the starting point is Nosal's theorem~\cite{nosalthesis1970}: every triangle-free graph with $m$ edges satisfies $\lambda_1(G)\le\sqrt{m}$. The corresponding supersaturation question asks how many triangles must appear once this threshold is crossed. Ning and  Zhai~\cite{NingZhai2023Triangles} obtained a lower bound on this number, while Zhang and Zhai~\cite{zhang2026spectralthresholdtrianglecounting} and Chen, Li, and Tang~\cite{ChenLiTang2026Nosal} obtained stronger estimates under stronger spectral assumptions. Li, Liu, and Zhang~\cite{LiLiuZhang2026Nosal} showed that crossing Nosal's threshold forces many triangles sharing a common edge, and obtained sharp estimates for the number of $4$-cycles, together with extensions to larger cliques. Further bounds on the largest number of triangles sharing a common edge, and on the number of edges lying in triangles, appear in~\cite{ZhaiLiLou2026,zhao2026problemsbooksizetriangularedges}.

Beyond triangles, Chen and Li~\cite{ChenLi2026EdgeSpectral} proved an edge-based version of Mubayi's counting theorem for color-critical graphs of chromatic number at least four, giving asymptotically sharp lower bounds on the number of copies when the spectral radius exceeds the
corresponding threshold within a specified range. Further counting results cover complete bipartite graphs and even cycles~\cite{li2026spectralsidorenkoinequalitiesedgespectral}, as well as odd cycles of length at least five~\cite{liu2026sharpedgespectralsupersaturationodd}. Together, these works study both how many copies must appear and how they share vertices or edges. 

\subsection{Supersaturation for positive square energy: new results.}
The main aim of this paper is to begin the study of supersaturation for positive square energy, which naturally extends the idea of spectral supersaturation. The square energies were studied by Wocjan and Elphick~\cite{WocjanElphick2013}, in connection with a spectral lower bound for the chromatic number involving the entire adjacency spectrum. Elphick, Farber, Goldberg, and Wocjan~\cite{ElphickFarberGoldbergWocjan2016} subsequently initiated the systematic study of square energies as graph invariants. Elphick and Wocjan~\cite[Conjecture~1 and
Section~3.1]{ElphickWocjan2018} conjectured the positive square-energy strengthening of Wilf's bound and proved the triangle-free case. The conjecture and this case
also appear in the later work of Elphick, Linz, and Wocjan~\cite{ElphickLinzWocjan2024}. Liu, Tang, and Zhang \cite{LiuTangZhang2026} subsequently proved the
positive square-energy Tur\'{a}n theorem: every $K_{r+1}$-free graph satisfies
\[
    \sqrt{s^+}\leq\left(1-\frac{1}{r}\right) n.
\]
The equality case was studied by Jayarajan, Kannan, Pragada, and Roy \cite{JayarajanKannanPragadaRoy2026}. Our main theorem is a natural extension of the literature of spectral supersaturation to positive square energy. Precisely,
we show the following,

\begin{theorem}\label{thm:main_square_energy}
For every integer $r\geq2$ and every $\varepsilon>0$, there exists $n_0=n_0(r,\varepsilon)$ such that the following holds. Let $G$ have order $n\geq n_0$. If
\begin{equation*}
    \sqrt{s^+(G)}>\left(1-\frac1r\right)n,
\end{equation*}
then
\begin{equation*}
    k_{r+1}(G)\geq
    \left(\frac1{r^{r-1}}-\varepsilon\right)n^{r-1}.
\end{equation*}
The coefficient $r^{-(r-1)}$ is asymptotically best possible for every fixed $r\geq2$.
\end{theorem}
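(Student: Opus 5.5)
The plan is a stability argument followed by a local clique count around a maximum $r$-cut. The abstract already names the three ingredients, and the lower-bound construction is what fixes the target constant.

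\textbf{Sharpness.} Take $T_r(n)$ and add one edge inside a part. Every part other than the one containing the new edge has size about $n/r$. The added edge lies in about $(n/r)^{r-1}$ copies of $K_{r+1}$, since we pick one vertex from each of the other $r-1$ parts. This gives $k_{r+1}\approx n^{r-1}/r^{r-1}$. One must still check that $\sqrt{s^+}$ strictly exceeds $(1-1/r)n$ after the addition, which is presumably where the equality characterization of Jayarajan--Kannan--Pragada--Roy enters. Alternatively, one perturbs: add one edge to $T_r(n)$ with $r\mid n$, where $s^+(T_r(n))=\lambda_1^2=(1-1/r)^2n^2$ exactly. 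First-order eigenvalue perturbation then shows that $s^+$ increases, because the added edge raises $\lambda_1$ and only shifts the zero eigenvalues.

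\textbf{Lower bound.} Suppose, towards a contradiction, that $k_{r+1}<(r^{-(r-1)}-\varepsilon)n^{r-1}=o(n^{r+1})$. By the clique removal lemma we can delete $o(n^2)$ edges to get a $K_{r+1}$-free graph $G'$. The positive square energy changes little under few edge deletions: $\sqrt{s^+}$ is the Frobenius norm of the positive part of $A$, and that map is $1$-Lipschitz in Frobenius norm. Hence $\sqrt{s^+(G')}\ge (1-1/r)n-o(n)$. By positive square energy stability, $G'$ (and hence $G$) is $o(n^2)$-close to $T_r(n)$. Next, take a maximum $r$-cut $V_1,\dots,V_r$ of $G$. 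Each part has size $n/r+o(n)$, there are $o(n^2)$ missing cross edges, and there are $o(n^2)$ inside edges. Using maximality of the cut, we then clean up. Let $L$ be the set of vertices having at least $\delta n$ non-neighbours in some other part; then $|L|=o(n)$. Maximality of the cut also ensures that every vertex has at most as many neighbours in its own part as in any other part.

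\textbf{Local count.} We need at least one edge inside some part. Otherwise $G$ is $r$-partite, hence $K_{r+1}$-free, contradicting the theorem of Liu--Tang--Zhang. Consider an inside edge $uv\subseteq V_1$ with $u,v\notin L$. Its common neighbourhood in each $V_j$, $j\ne1$, has size at least $n/r-o(n)-2\delta n$. Greedily building cliques through these near-complete parts gives $(1-O(\delta))(n/r)^{r-1}$ copies of $K_{r+1}$, which contradicts the assumption. The remaining case is when every inside edge touches $L$. Here I would count cliques through vertices $w\in L$ with many inside neighbours, using the max-cut degree condition to argue that $w$ has about $d/r$ neighbours spread across every part. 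Alternatively, I would argue spectrally that a graph in which all inside edges meet $L$ cannot exceed the threshold unless there are many cliques. The spectral route would perturb from the $r$-partite graph on $V\setminus L$ and compare $s^+$ with $((1-1/r)n)^2$. For $r\ge3$ this should work through the Frobenius/Lipschitz comparison with a weighted Turán bound on the parts.

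\textbf{Main obstacle.} The hardest step is this last one: exploiting the strict spectral inequality $\sqrt{s^+}>(1-1/r)n$ at scale $o(n^2)$ when the defects are concentrated on few vertices. For $r=2$ the cleaning fails outright, because bipartite $\lambda_n$ contributions cancel against $\lambda_1$. There I would write $A$ in block form with respect to the max cut, $\begin{pmatrix} B_1 & C\\ C^{T} & B_2\end{pmatrix}$, and use $s^+=\tfrac12(\tr A^2+\tr|A|A)$. I would then bound $s^+-s^-=\tr(A|A|)$ via $\tr(A|A|)\le \|A\|\,\|\cdot\|$ type inequalities, forcing the in-part blocks $B_i$ to produce about $n/2$ triangles per inside edge. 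This is the case the abstract says needs a separate block-matrix argument.
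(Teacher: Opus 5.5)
Your reduction (clique removal, stability, maximum $r$-cut) and your count through an inside edge with both endpoints outside $L$ are fine. The sharpness example also works, and more simply $\sqrt{s^+}\ge\lambda_1\ge 2m/n>(1-\frac1r)n$, so no equality characterisation is needed. The genuine gap is your remaining case, where every inside edge meets $L$. This is exactly where the hypothesis must be used at the scale of single edges, and you give no tool that does so.

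Nothing you have derived (stability, maximality of the cut, $\ell,c=o(n^2)$) excludes the following: take $T_r(n)$, add one inside edge $uv\subseteq V_1$, and delete the edges from $u$ and $v$ to complementary halves of $V_2$, keeping a few common neighbours. The cut is still maximal and the graph has only $O(n^{r-2})$ copies of $K_{r+1}$, so only the spectral hypothesis can rule it out. The Lipschitz bound \eqref{eq:Frob_1} cannot do this. It controls $\sqrt{s^+}$ only up to $\|A(G)-A_K\|_F=\sqrt{2(\ell+c)}$, i.e.\ $s^+$ only up to order $n\sqrt{\ell+c}$. But the excess $s^+-(1-\frac1r)^2n^2$ may be arbitrarily small, and each inside edge contributes only $O(1)$ to it. Perturbing from the $r$-partite graph on $V\setminus L$ is worse, since deleting $L$ moves $s^+$ by order $n|L|$.

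The missing idea is the one-sided second-order bound \eqref{eq:Frob_2}, $s^+(Q+R)\le s^+(Q)+2\langle Q_+,R\rangle+\|R\|_F^2$. Apply it with $Q=A_K$, the complete $r$-partite graph on the max-cut parts, and $R=A(G)-A_K$. Since $K$ has a single positive eigenvalue $\lambda_K\le(1-\frac1r)n$ and a nearly constant Perron vector $x$, the matrix $(A_K)_+=\lambda_Kxx^{\mathsf T}$ has all entries $(1-\frac1r)+o(1)$. The strict hypothesis then gives
\[
0<\Bigl(4\bigl(1-\tfrac1r\bigr)+2+o(1)\Bigr)\ell-\Bigl(4\bigl(1-\tfrac1r\bigr)-2-o(1)\Bigr)c .
\]
So for $r\ge3$ one gets $c\le(\frac{3r-2}{r-2}+o(1))\ell$, with error $o(\ell+c)$ rather than $o(n^2)$. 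This closes your remaining case at once. On one hand $c\ge\frac12\delta n|L|$. On the other, $\ell\le|L|\max_v d_I(v)$, and $\max_v d_I(v)=o(n)$ by a clique count through a vertex of large internal degree. Hence $c/\ell\to\infty$, a contradiction. The paper instead counts all candidates consisting of an inside edge plus one vertex from each other part, and bounds the spoiled ones by $c\bigl(2n^{r-2}\max_v d_I(v)+\ell n^{r-3}\bigr)=o(\ell n^{r-1})$.

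For $r=2$ the coefficient $4(1-\frac1r)-2$ of $c$ vanishes, so the inequality above says nothing about $c$. Your suggested ``$\tr(A|A|)\le\|A\|\,\|\cdot\|$-type inequalities'' do not supply a replacement. What works is the following. Split $A=A_0+A_{\mathrm{int}}$ into the crossing part, with biadjacency block $A_{12}$, and the inside part. Use $s^+(A_0)=e(V_1,V_2)$ and $(A_0)_+=\frac12\begin{pmatrix}(A_{12}A_{12}^{\mathsf T})^{1/2}&A_{12}\\ A_{12}^{\mathsf T}&(A_{12}^{\mathsf T}A_{12})^{1/2}\end{pmatrix}$. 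Replace $(A_{12}A_{12}^{\mathsf T})^{1/2}$ by $A_{12}A_{12}^{\mathsf T}/\sigma_1$, where $\sigma_1\ge(\frac12-o(1))n$ is the top singular value; the remainder has Frobenius norm at most $\sqrt c$. The first-order term becomes $2k_3^\times/\sigma_1=O(1)$, where $k_3^\times$ counts triangles meeting both parts. Then \eqref{eq:Frob_2} yields $s^+\le e(V_1,V_2)+2\ell+2k_3^\times/\sigma_1+2\sqrt{\ell c}$. Hence $c+\frac14(|V_1|-|V_2|)^2=O_\varepsilon(\ell+1)$, which closes your remaining case in the same way; the paper goes further and shows $\ell,c=O_\varepsilon(1)$.

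Finally, ``positive square energy stability'' is not an off-the-shelf result. The paper proves it (Lemma~\ref{prop:se_stability}) by analysing near-equality in the Liu--Tang--Zhang argument: the local harmonic inequality, the doubly nonnegative matrix $A_+\circ A_+$, and random Caro--Wei partitions. It then inducts on $r$ via clique removal. Your proposal leaves this step unproved as well.
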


The above result extends asymptotically sharp spectral supersaturation from the largest adjacency eigenvalue to the combined contribution of all positive adjacency eigenvalues. To the best of our knowledge, this is the first asymptotically sharp clique supersaturation theorem based on positive square energy. Exceeding the threshold $\sqrt{s^+(G)}=(1-1/r)n$ by any positive amount, however small, forces, asymptotically, at least as many $(r+1)$-cliques as adding one edge to a balanced Tur\'an graph. The hypothesis can hold even when the largest eigenvalue lies below $\lambda_1(T_r(n))$. For example, let $G=K_{(r-1)t}\sqcup K_t$ be the disjoint union of two cliques, where $t\ge2r$ is an integer and $n=rt$. Then
$\lambda_1(G)=(r-1)t-1<\lambda_1(T_r(n))=(r-1)t,$ whereas $s^+(G)=((r-1)t-1)^2+(t-1)^2>\left(1-\frac1r\right)^2n^2.$ Thus the square-energy hypothesis holds even though the spectral-radius hypothesis fails. Therefore earlier results requiring this spectral-radius threshold do not directly apply. Thus, the theorem preserves the optimal leading coefficient while allowing the required spectral contribution to be spread across several positive eigenvalues. Our arguments use the local harmonic inequality and random-partition method from the proof of \cite{LiuTangZhang2026}.

\subsection{Overview of the proof}\label{sec:proof_overview}
The main difficulty is that the positive square energy may exceed the threshold by an arbitrarily small amount. We must turn this small spectral excess into a sharp lower bound on the number of cliques. We argue by contradiction: for fixed $r\ge2$ and $\varepsilon>0$, suppose that arbitrarily large graphs satisfy
\[
\sqrt{s^+(G)}>\left(1-\frac1r\right)n,
\qquad
k_{r+1}(G)\le
\left(\frac{1}{r^{r-1}}-\varepsilon\right)n^{r-1}.
\]
Under these assumptions, the proof follows the scheme
\[
\begin{aligned}
&\text{Clique removal and positive square-energy stability}\\
&\qquad\Longrightarrow
  \text{a nearly balanced maximum }r\text{-cut}\\
&\qquad\Longrightarrow
  \text{control of internal and missing edges}\\
&\qquad\Longrightarrow
  \text{the required clique count}.
\end{aligned}
\] 
We describe the key ideas in more detail below.

\medskip
\noindent\textbf{Stability from near equality.}
Our first ingredient is a stability theorem for positive square energy. We show that a $K_{r+1}$-free graph whose square root of positive square energy is close to the extremal value $\left(1-\frac{1}{r}\right)n$ differs from $T_r(n)$ in only $o(n^2)$ edges. The proof examines near equality in the local harmonic inequality and random-partition argument of Liu, Tang, and Zhang. These estimates produce a partition into $r$ nearly equal blocks. The last block is independent and is joined to almost every vertex outside it. Induction on $r$ then gives the required structure.

\medskip
\noindent\textbf{Reducing to a maximum cut.}
The assumed clique bound allows the clique removal lemma to delete $o(n^2)$ edges and produce a $K_{r+1}$-free graph.
The resulting change in $\sqrt{s^+}$ is only $o(n)$, so the stability theorem applies. Consequently, a partition of $G$
maximizing the number of crossing edges has parts of size $n/r+o(n)$. Write $\ell$ for the number of internal edges and
$c$ for the number of missing crossing edges. Initially, stability gives only $\ell,c=o(n^2)$; sharper control is needed
to detect the effect of even one internal edge.

\medskip
\noindent\textbf{Counting cliques when $r\ge3$.}
Let $K$ be the complete $r$-partite graph whose parts are those of a maximum $r$-cut of $G$. The reduction argument shows that these parts are nearly equal. We compare $G$ with $K$. Its Perron vector is nearly constant, which allows a
matrix perturbation estimate to give
\[
\ell\ge1,
\qquad
c=O_r(\ell).
\]
The crucial point is that the error in this estimate is $o(1)(\ell+c)$, rather than merely $o(n^2)$. The maximum-cut condition and the assumed small clique count also force every internal degree to be $o(n)$. We then count clique candidates consisting of an internal edge and one vertex from each other part. There are $(1+o(1))\ell(n/r)^{r-1}$ such candidates, and the preceding bounds show that missing crossing edges spoil only a negligible fraction. Hence
\[
k_{r+1}(G)\ge
(1-o(1))\ell\left(\frac nr\right)^{r-1},
\]
contradicting the assumed bound.

\medskip
\noindent\textbf{Why triangles need a separate argument.}
When $r=2$, the coefficient that controls missing crossing edges in the preceding estimate vanishes. We instead write the
adjacency matrix in blocks and use the singular values of the crossing block to relate positive square energy directly to
triangles meeting both parts. Under the contradiction assumption, this estimate and the maximum-cut condition force
$\ell$, $c$, and the difference between the two part sizes to remain bounded independently of $n$. Any internal edge then
lies in at least $n/2-O_\varepsilon(1)$ triangles, giving the required contradiction.

Finally, adding one edge inside a part of $T_r(n)$ when $r$
divides $n$ shows that the leading coefficient cannot be improved.

The paper is organized as follows. Section~\ref{sec:specext_preliminaries} collects the basic
spectral estimates and graph-theoretic tools used in the proofs. Section~\ref{sec:se_stability} shows that $K_{r+1}$-free graphs whose positive square energy is close to its largest possible value are close to a balanced Tur\'an graph. Section~\ref{sec:reduction} uses the clique removal lemma to reduce the main proof to graphs close to this structure.
Sections~\ref{sec:se_higher_cliques} and~\ref{sec:se_triangles} establish the required clique counts for $r\ge3$
and $r=2$, respectively. Section~\ref{sec:se_sharpness} gives an example showing that the coefficient in the main theorem cannot be improved.

\section{Spectral and extremal preliminaries}
\label{sec:specext_preliminaries}

We use the positive square-energy Tur\'{a}n theorem of Liu, Tang, and Zhang
\cite[Theorem~1.2]{LiuTangZhang2026}.

\begin{theorem}[Liu--Tang--Zhang]\label{thm:se_turan}
For every integer $r\geq2$, every $K_{r+1}$-free graph $G$ satisfies
\[
    \sqrt{s^+(G)}\leq\left(1-\frac{1}{r}\right)n.
\]
\end{theorem}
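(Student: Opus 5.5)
The plan is to encode the positive part of the spectrum as a Gram matrix and reduce to a Motzkin--Straus type inequality. Write $A=P-N$ with $P,N$ positive semidefinite and $PN=NP=0$, where $P=\sum_{\lambda_i>0}\lambda_i x_ix_i^{T}$ is the spectral projection onto the positive eigenspace. Then
\[
s^+=\tr P^2=\tr(AP)=\sum_{u,v}A_{uv}P_{uv},
\]
so $s^+$ is a sum over ordered edges $uv$ of $P_{uv}$. The target inequality $s^+\le (1-\frac1r)^2n^2$ is equivalent to
\[
\tr(AP)\le \left(1-\frac1r\right)n\,\norm{P}_F .
\]
The point is that this must use both the $K_{r+1}$-freeness of $G$ and the orthogonality $PN=0$. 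The orthogonality is what ties $P$ to $A$: for instance $(P^2)_{uu}=(PA)_{uu}=\sum_{v\in N(u)}P_{uv}$.

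\textbf{Step 1 (local harmonic inequality).} For each vertex $u$ put $q_u=(P^2)_{uu}=\sum_v P_{uv}^2$, so that $\sum_u q_u=s^+$. The identity $q_u=\sum_{v\in N(u)}P_{uv}$ expresses $q_u$ through the entries of row $u$ on the neighbourhood only. Cauchy--Schwarz gives only $q_u\le d(u)$, which recovers the trivial bound $s^+\le 2m$. To do better, I would apply Cauchy--Schwarz inside $N(u)$ and use that $G[N(u)]$ is $K_r$-free. This should yield a local inequality bounding $q_u$ by a quantity that behaves like $(1-\frac1r)$ times a harmonic or quadratic mean of the $q_v$, $v\in N(u)$, weighted over $N(u)$.

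\textbf{Step 2 (random partition).} To turn $K_{r+1}$-freeness into a quantitative bound, I would sample $g$ as a standard Gaussian vector in the positive eigenspace and set $x_u=\ip{P^{1/2}e_u}{g}$. Then $\mathbb{E}[x_ux_v]=P_{uv}$ and $s^+=\mathbb{E}[x^{T}Ax]$. For each realisation I would randomly split the vertices according to a random ordering, Motzkin--Straus style. This is done so that the contribution of every edge is charged to at most $r$ classes. An edge contributes only when its endpoints lie in different classes, and since there is no $K_{r+1}$ at most $r$ classes can be pairwise ``active''. Averaging over the randomness should give
\[
x^{T}Ax\le\left(1-\frac1r\right)\Bigl(\sum_u |x_u|\Bigr)^2 .
\]

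\textbf{Step 3 (combine).} I would combine Steps 1 and 2 with Cauchy--Schwarz, $\bigl(\sum_u w_u\bigr)^2\le n\sum_u w_u^2$, applied with weights $w_u=\sqrt{q_u}$ rather than $|x_u|$. This should give
\[
s^+\le\left(1-\frac1r\right)n\Bigl(\sum_u q_u\Bigr)^{1/2}=\left(1-\frac1r\right)n\sqrt{s^+},
\]
which is the claim.

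\textbf{Main obstacle.} The naive route is: Gaussian projection, then Motzkin--Straus for $|x|$, then Cauchy--Schwarz. It only gives $s^+\le(1-\frac1r)n\tr P$, and $\tr P$ is not controlled by $\sqrt{s^+}$. So the crux is the local inequality of Step 1. It must use $PN=0$ to replace the diagonal weights $P_{uu}$ by $q_u=(P^2)_{uu}$, so that the final Cauchy--Schwarz lands on $\norm{P}_F$ rather than $\tr P$. I would first try to prove $\sum_{u\sim v}\sqrt{q_uq_v}\ge s^+$, working from $q_u=\sum_{v\in N(u)}P_{uv}$ and $P_{uv}\le\ldots$. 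I would then apply Motzkin--Straus directly to the nonnegative vector $(\sqrt{q_u})_u$, which would bypass the Gaussian step entirely.
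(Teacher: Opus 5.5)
Your proposal has a genuine gap, located in Steps~1 and~3.

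First, the arithmetic of Step~3 does not close. With $w_u=\sqrt{q_u}$, Cauchy--Schwarz gives $(\sum_u w_u)^2\le n\sum_u q_u=n\,s^+$, not $n\sqrt{s^+}$. So suppose you did prove your proposed key inequality $\sum_{u\sim v}\sqrt{q_uq_v}\ge s^+$. Motzkin--Straus applied to $(\sqrt{q_u})_u$ would then give only $s^+\le(1-\frac1r)n\,s^+$, which is vacuous: both sides are linear in $s^+$.

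Second, no renormalization of a single test vector can repair this. Any chain $s^+\le\alpha\,w^{\mathsf T}Aw\le\alpha(1-\frac1r)n\norm{w}_2^2\le(1-\frac1r)n\sqrt{s^+}$ forces $w^{\mathsf T}Aw/\norm{w}_2^2\ge\sqrt{s^+}$. But the Rayleigh quotient is at most $\lambda_1$, and $\lambda_1<\sqrt{s^+}$ whenever $G$ has two positive eigenvalues. This already happens for $2K_2$, where $\lambda_1=1<\sqrt2=\sqrt{s^+}$. A vector-level Motzkin--Straus argument can therefore at best reprove Wilf's bound for $\lambda_1$, which is exactly what the theorem strengthens. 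Your Step~1 is only a hoped-for inequality, with no mechanism that escapes this barrier. Your Gaussian averaging does use many vectors, but as you noticed it lands on $\tr P=\sum_{\lambda_i>0}\lambda_i\ge\sqrt{s^+}$.

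The missing idea is to run the Tur\'an/Motzkin--Straus step at the level of a matrix, and to make the local harmonic inequality purely combinatorial. The paper takes this theorem from Liu--Tang--Zhang, but their argument appears as the chain \eqref{eq:full_chain} in Section~\ref{sec:se_stability}.

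\begin{itemize}
\item Set $X=A_+$ (your $P$) and $M=X\circ X$. The matrix $M$ is doubly nonnegative by the Schur product theorem, and $\one^{\mathsf T}M\one=s^+$.
\item Run the Caro--Wei process on $G$ itself. The pivots form a clique, so every outcome has at most $r$ blocks.
\item Lemma~\ref{lem:dnn_partition} then gives $\overline W=\sum_{uv\in E}q_{uv}X_{uv}^2\le\frac12(1-\frac1r)s^+$, where $q_{uv}$ is the probability that $uv$ is separated.
\item The local harmonic inequality \eqref{eq:local_harmonic} involves only $p_v$ and $q_{uv}$, not the entries of $P$. Summed over $v$ and combined with $\sum_v p_v\le r$, it gives $H=\sum_{uv\in E}1/q_{uv}\le\frac12(1-\frac1r)n^2$.
\item Your identity $s^+=\tr(AP)$ reads $s^+=2\sum_{uv\in E}X_{uv}\le 2S$, where $S=\sum_{uv\in E}|X_{uv}|$.
\item Cauchy--Schwarz gives $S^2\le\overline W H$, hence $(s^+)^2/4\le\frac14(1-\frac1r)^2n^2\,s^+$.
\end{itemize}

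The degree gap your chain lacks comes from pairing first powers $X_{uv}$ against squares $X_{uv}^2$ edge by edge, with the complementary weights $q_{uv}$ and $1/q_{uv}$.
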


For a real symmetric matrix $Q$, write $Q_+$ for its positive spectral part and $Q_-$ for its negative spectral part. Thus, if
\[
 Q=U\operatorname{diag}(\theta_1,\ldots,\theta_N)U^{\mathsf T},
\]
then
\[
 Q_+=U\operatorname{diag}((\theta_1)_+,\ldots,(\theta_N)_+)
 U^{\mathsf T},
 \qquad (x)_+:=\max\{x,0\}
\]
and 
\[
 Q_-=U\operatorname{diag}((\theta_1)_-,\ldots,(\theta_N)_-)
 U^{\mathsf T},
 \qquad (x)_-:=\max\{-x,0\},
\]
where $U$ is orthogonal.
We use the Frobenius inner product and norm
\[
 \ip{Q}{R}=\tr(Q^{\mathsf T}R),
 \qquad
 \norm{Q}_F^2=\ip{Q}{Q}.
\]

The positive (respectively, negative) square energy of $Q$ is the sum of the squares of its positive (respectively, negative) eigenvalues. Note that $\|Q_+\|_F^2=s^+(Q)$. A real $d\times d$ matrix $Z$ is positive semidefinite, written $Z\succeq0$, if it is symmetric and $x^T Zx\ge0$ for every $x\in\mathbb{R}^d$. For positive semidefinite matrices $Z_1$ and $Z_2$,
$ \ip{Z_1}{Z_2} =\tr(Z_1^{1/2}Z_2Z_1^{1/2}) \geq 0.$ For any real symmetric matrix $Q$ we must have $Q=Q_+-Q_-$ with $Q_+,Q_-\succeq0$ and $Q_+Q_-=0$.

We now mention a few results pertaining to the Frobenius norms of the positive and the negative parts of spectral decomposition. We include short proofs for completeness.
\begin{lemma}
\label{lem:se_matrix}
For real symmetric matrices $Q,R$ of the same order, the following hold.
\begin{align} 
 \left|\sqrt{s^+(Q)}-\sqrt{s^+(R)}\right|
 &\leq \norm{Q-R}_F,
 \label{eq:Frob_1}\\
 s^+(Q+R)&\leq
 s^+(Q)+2\ip{Q_+}{R}+\norm{R}_F^2,
 \label{eq:Frob_2}\\
 s^+(Q)&=
 \max_{Z\succeq0}\left\{2\ip{Q}{Z}-\norm{Z}_F^2\right\}.
 \label{eq:Frob_3}
\end{align}
\end{lemma}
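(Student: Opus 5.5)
The natural order is to prove \eqref{eq:Frob_3} first and then derive the other two inequalities from it. The variational formula is the tool that makes the positive part behave well under perturbation.

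For \eqref{eq:Frob_3}, fix $Z\succeq0$ and write $Q=Q_+-Q_-$. Then
\[
2\ip{Q}{Z}-\norm{Z}_F^2=2\ip{Q_+}{Z}-2\ip{Q_-}{Z}-\norm{Z}_F^2\le 2\ip{Q_+}{Z}-\norm{Z}_F^2 ,
\]
because $\ip{Q_-}{Z}\ge0$ for positive semidefinite matrices. Completing the square gives
\[
2\ip{Q_+}{Z}-\norm{Z}_F^2=\norm{Q_+}_F^2-\norm{Q_+-Z}_F^2\le s^+(Q).
\]
Equality holds at $Z=Q_+$, since $\ip{Q_-}{Q_+}=\tr(Q_-Q_+)=0$. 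This proves \eqref{eq:Frob_3}.

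For \eqref{eq:Frob_2}, apply \eqref{eq:Frob_3} to $Q+R$ and let $Z=(Q+R)_+$ be the maximizer. Then
\[
s^+(Q+R)=2\ip{Q}{Z}-\norm{Z}_F^2+2\ip{R}{Z}.
\]
The first two terms are at most $s^+(Q)$, again by \eqref{eq:Frob_3}, but this leaves $2\ip{R}{Z}$ with $Z=(Q+R)_+$ instead of $Q_+$. The fix is to rewrite the first two terms exactly:
\[
2\ip{Q}{Z}-\norm{Z}_F^2\le 2\ip{Q_+}{Z}-\norm{Z}_F^2=s^+(Q)-\norm{Q_+-Z}_F^2 .
\]
Hence
\[
s^+(Q+R)\le s^+(Q)+2\ip{R}{Q_+}+2\ip{R}{Z-Q_+}-\norm{Z-Q_+}_F^2 .
\]
By Cauchy--Schwarz and $2ab-b^2\le a^2$, we have $2\ip{R}{Z-Q_+}-\norm{Z-Q_+}_F^2\le\norm{R}_F^2$, which gives \eqref{eq:Frob_2}. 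This bookkeeping is the only step that needs care. One must keep the deficit $-\norm{Q_+-Z}_F^2$ rather than discard it.

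For \eqref{eq:Frob_1}, note that \eqref{eq:Frob_3} expresses $\sqrt{s^+(Q)}$ as a distance. Since $\norm{Q-Z}_F^2=\norm{Q}_F^2-2\ip{Q}{Z}+\norm{Z}_F^2$, formula \eqref{eq:Frob_3} gives $s^+(Q)=\norm{Q}_F^2-\operatorname{dist}(Q,\mathcal{S})^2$, where $\mathcal{S}$ is the cone of positive semidefinite matrices. The minimizer is $Z=Q_+$ and $\operatorname{dist}(Q,\mathcal{S})=\norm{Q_-}_F$. Thus $s^+(Q)=\operatorname{dist}(Q,-\mathcal{S})^2$. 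Indeed, $\operatorname{dist}(Q,-\mathcal{S})=\operatorname{dist}(-Q,\mathcal{S})=\norm{(-Q)_-}_F=\norm{Q_+}_F$. So $\sqrt{s^+(Q)}=\operatorname{dist}(Q,-\mathcal{S})$. Distance to a fixed set is $1$-Lipschitz in the Frobenius norm, so $\bigl|\sqrt{s^+(Q)}-\sqrt{s^+(R)}\bigr|\le\norm{Q-R}_F$.

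Alternatively, \eqref{eq:Frob_1} follows from \eqref{eq:Frob_2}. Write $Q=R+(Q-R)$ and use $\ip{R_+}{Q-R}\le\sqrt{s^+(R)}\,\norm{Q-R}_F$. This gives $s^+(Q)\le\bigl(\sqrt{s^+(R)}+\norm{Q-R}_F\bigr)^2$, and swapping the roles of $Q$ and $R$ finishes the proof.
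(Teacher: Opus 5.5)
Your proof is correct. It differs from the paper mainly in how it is organized and in the argument for \eqref{eq:Frob_1}.

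Your proof of \eqref{eq:Frob_3} is exactly the paper's.

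For \eqref{eq:Frob_2} you evaluate the variational formula at $Z=(Q+R)_+$. The paper instead decomposes $Q_++R=(Q+R)_++\bigl(Q_--(Q+R)_-\bigr)$ and expands $\norm{Q_++R}_F^2$. Since $Q_++R-Z=Q_--(Q+R)_-$, your chain is essentially the same identity in different clothing. Both arguments rest on the single fact $\ip{Q_-}{(Q+R)_+}\ge0$.

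The genuine difference is in \eqref{eq:Frob_1}. The paper proves the stronger statement $\norm{Q_+-R_+}_F\le\norm{Q-R}_F$, namely that the positive-part map (the Frobenius projection onto the positive semidefinite cone) is nonexpansive. It does this by expanding $\norm{Q-R}_F^2$ and discarding the nonnegative cross terms $\ip{Q_+}{R_-}$ and $\ip{Q_-}{R_+}$, then finishes with the triangle inequality. You instead give two routes:
\begin{itemize}
\item identifying $\sqrt{s^+(Q)}=\operatorname{dist}(Q,-\mathcal S)$ and using that a distance function is $1$-Lipschitz;
\item bootstrapping from \eqref{eq:Frob_2} via $\ip{R_+}{Q-R}\le\sqrt{s^+(R)}\,\norm{Q-R}_F$.
\end{itemize}
Both are valid.

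Your approach makes the whole lemma a consequence of one variational principle, and it shows that \eqref{eq:Frob_1} follows formally from \eqref{eq:Frob_2}. The paper's approach yields more: Lipschitz control of the matrix $Q_+$ itself, not just of its norm. Only the scalar statement is used later in the paper, so your version suffices for everything downstream.
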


\begin{proof}

We have
\begin{equation*}
    \norm{Q-R}_F^2 = \norm{Q_+-R_+}_F^2+\norm{Q_--R_-}_F^2+2\ip{Q_+}{R_-}+2\ip{Q_-}{R_+} \geq \norm{Q_+-R_+}_F^2.
\end{equation*}
The triangle inequality now gives
\[
 \left|\norm{Q_+}_F-\norm{R_+}_F\right|
 \leq\norm{Q_+-R_+}_F
 \leq\norm{Q-R}_F,
\]
which is \eqref{eq:Frob_1}.

\begin{align*}
    \norm{Q_+ +R}_F^2 &= \norm{Q+ Q_- +R}_F^2 =\norm{(Q+R)_+ + (Q_- -(Q+R)_-)}_F^2 \\
    &= \norm{(Q+R)_+}_F^2+\norm{Q_--(Q+R)_-}_F^2+2\ip{(Q+R)_+}{Q_--(Q+R)_-}\\
    &=\norm{(Q+R)_+}_F^2+ \norm{Q_--(Q+R)_-}_F^2+2 \ip{(Q+R)_+}{Q_-}\\
    &\geq \norm{(Q+R)_+}_F^2=s^+(Q+R).
\end{align*}

On the other hand,
$$\norm{Q_+ +R}_F^2=\norm{Q_+}_F^2+2\ip{Q_+}{R}+\norm{R}_F^2.$$
Thus,
$$s^+(Q+R) \leq s^+(Q)+2\ip{Q_+}{R}+\norm{R}_F^2.$$
This proves \eqref{eq:Frob_2}.

For every positive semidefinite matrix $Z$, we have $\ip{Q_-}{Z}\ge0$, since both $Q_-$ and $Z$
are positive semidefinite. Consequently,
\begin{align*}
2\ip{Q}{Z}-\norm{Z}_F^2
&=2\ip{Q_+}{Z}
  -2\ip{Q_-}{Z}-\norm{Z}_F^2\\
&\le 2\ip{Q_+}{Z}-\norm{Z}_F^2\\
&=\norm{Q_+}_F^2-\norm{Z-Q_+}_F^2\\
&\le\norm{Q_+}_F^2.
\end{align*}
Equality holds at $Z=Q_+$, proving \eqref{eq:Frob_3}.
\end{proof}
For adjacency matrices, the variational identity \eqref{eq:Frob_3} appears in \cite[Proof of Lemma~6.1]{zhang2024extremal}; the same argument applies to arbitrary real symmetric matrices.

We use the following standard clique removal lemma,
which follows from Szemer\'edi's regularity lemma and
the graph counting lemma; see \cite[Theorems~1.1 and~1.2]{Gowers2007}. One can find the lemma, as stated, in \cite[Theorem~1]{Mubayi2010}.
\begin{lemma}[Clique removal lemma]
\label{lem:cl_removal}
Fix $q\geq3$.  If a graph $J$ of order $N$ contains $o(N^q)$ copies of
$K_q$, then one can delete $o(N^2)$ edges from $J$ and obtain a
$K_q$-free graph.
\end{lemma}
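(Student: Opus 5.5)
We reduce the asymptotic statement to an explicit $\eta$--$\zeta$ form and prove that form with Szemer\'edi's regularity lemma and the graph counting lemma, via the standard ``cleaning'' argument. The precise claim is: for every fixed $q\ge3$ and every $\eta>0$ there exist $\zeta>0$ and $N_0$ such that every graph $J$ of order $N\ge N_0$ with fewer than $\zeta N^q$ copies of $K_q$ becomes $K_q$-free after deleting at most $\eta N^2$ edges. Applied along a sequence of graphs with $o(N^q)$ copies of $K_q$, this gives a deletion of $o(N^2)$ edges. (For $N<N_0$ we may take $\zeta<N_0^{-q}$, so that the hypothesis forces $J$ to be $K_q$-free already; alternatively the asymptotic formulation simply ignores bounded $N$.)

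\textbf{Choice of parameters.} Set $\delta=\eta/4$. Choose $\varepsilon\le\eta/4$ small enough that the counting lemma applies at density $\delta$ for $K_q$. This means that any $q$ vertex classes $V_1,\dots,V_q$ that are pairwise $\varepsilon$-regular with all densities at least $\delta$ span at least $c\,|V_1|\cdots|V_q|$ copies of $K_q$, where $c=\tfrac12\delta^{\binom q2}$. Next apply the regularity lemma with parameter $\varepsilon$ and a lower bound $m_0\ge 4/\eta$ on the number of parts. This gives an equipartition into $M$ parts with $m_0\le M\le M(\varepsilon,m_0)$ (with a tiny exceptional set, whose edges we simply delete), in which all but $\varepsilon M^2$ pairs are $\varepsilon$-regular.

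\textbf{Cleaning.} From $J$ we delete:
\begin{itemize}
\item all edges inside parts, at most $N^2/M\le \eta N^2/4$ of them;
\item all edges in irregular pairs, at most $\varepsilon N^2$;
\item all edges in pairs of density below $\delta$, at most $\delta N^2/2$;
\item all edges touching the exceptional set.
\end{itemize}
In total this removes at most roughly $\eta N^2$ edges. Suppose the cleaned graph $J'$ still contains a copy of $K_q$. Its $q$ vertices lie in $q$ distinct parts, since no edges remain inside a part. Every pair among these parts is $\varepsilon$-regular of density at least $\delta$, since every other pair has had all its edges removed. The counting lemma then yields at least $c\,(N/(2M))^q$ copies of $K_q$ in $J'\subseteq J$. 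Taking $\zeta<c\,(2M(\varepsilon,m_0))^{-q}$ contradicts the hypothesis on $J$, so $J'$ is $K_q$-free.

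\textbf{Main obstacle.} The only delicate point is the order of quantifiers. The constants $\varepsilon$ and $m_0$ must be chosen from $\eta$ and $q$ alone, before the regularity lemma is invoked, and $\zeta$ must depend only on the resulting upper bound $M(\varepsilon,m_0)$, not on the particular partition obtained. The counting lemma must also be used in its form for pairwise $\varepsilon$-regular $q$-tuples, with $\varepsilon$ small relative to $\delta^{\binom q2}$. Everything else is bookkeeping, and the full details are standard (see \cite{Gowers2007}).
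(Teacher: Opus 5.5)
Your proposal is correct and follows essentially the same route as the paper: the paper does not reprove this lemma but cites it as a consequence of Szemer\'edi's regularity lemma and the graph counting lemma (\cite{Gowers2007}, and \cite{Mubayi2010} for the stated form), and your cleaning argument is that standard derivation. Your bookkeeping holds up: the deletions total at most $7\eta N^2/8$, the parts have size at least $N/(2M)$, and $\zeta$ depends only on the bound $M(\varepsilon,m_0)$, with the diagonal passage from the $\eta$--$\zeta$ form to the $o(\cdot)$ form handled correctly.
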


If $G'$ is obtained from $G$ by changing $d=o(n^2)$ edges, then
\[
 \norm{A(G)-A(G')}_F=\sqrt{2d}=o(n).
\]
Consequently, \eqref{eq:Frob_1} gives
\begin{equation}
 \left|\sqrt{s^+(G)}-\sqrt{s^+(G')}\right|=o(n).
 \label{eq:se_edit_transfer}
\end{equation}

\section{Positive square-energy stability}\label{sec:se_stability}

In this section, we prove a stability result that is crucial in proving the main result. The result describes what a $K_{r+1}$-free graph looks like when its positive square energy is close to the largest possible value. It shows that such a graph must be close to the balanced Tur\'an graph: its vertices can be divided into $r$ nearly equal parts, with few edges
inside the parts and almost all possible edges between different parts present. More precisely, changing only $o(n^2)$ edges is enough to obtain $T_r(n)$.

Now we make the meaning of closeness precise. F\"uredi in \cite{Furedi2015} defined the edit distance between graphs $G_1$ and $G_2$ with the same vertex set as the cardinality of the symmetric difference of the edge sets. We call two graphs on the same vertex set $o(n^2)$-edit-close if their edge sets have symmetric difference of size $o(n^2)$. 

The stability result can be stated as follows:

\begin{lemma}
\label{prop:se_stability}
Fix $r\geq2$.  Let $G_1$ be a $K_{r+1}$-free graph of order $n$.  If
\[
 \sqrt{s^+(G_1)}=\left(1-\frac{1}{r}\right) n-o(n),
\]
then $G_1$ is $o(n^2)$-edit-close to $T_r(n)$.  More explicitly, there is a partition
\[
 V(G_1)=V_1\sqcup\cdots\sqcup V_r
\]
such that
\[
 |V_i|=\frac nr+o(n)\quad(1\leq i\leq r)
\]
and
\[
 \sum_{i=1}^r |E(G_1[V_i])|
 +\sum_{1\leq i<j\leq r}
 \bigl(|V_i||V_j|-e_{G_1}(V_i,V_j)\bigr)=o(n^2).
\]
\end{lemma}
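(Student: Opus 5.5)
We plan to prove Lemma~\ref{prop:se_stability} by the shortest available route: reduce it to the classical Erd\H{o}s--Simonovits stability theorem. That theorem says a $K_{r+1}$-free graph with $t_r(n)-o(n^2)$ edges is $o(n^2)$-edit-close to $T_r(n)$. So the task is to turn near-extremal $s^+$ into a near-extremal edge count. The overview suggests a near-equality analysis of the Liu--Tang--Zhang argument; we keep that as the fallback and first try to get the edge count directly from $s^+$.

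\textbf{Step 1 (edge count from $s^+$).} Since $s^+(G_1)\le s^++s^-=2m$, the hypothesis gives
\[
2m\ge \left(1-\frac1r\right)^2n^2-o(n^2).
\]
This only yields $m\ge (1-1/r)^2n^2/2$, which falls short of $t_r(n)\approx(1-1/r)n^2/2$ by a factor $1-1/r$. Hence $s^-$ must be controlled as well. We need an inequality of the form
\[
s^-(G_1)\ge \frac{s^+(G_1)}{r-1}-o(n^2)
\qquad\text{or}\qquad
s^-(G_1)\ge \frac{s^+(G_1)}{r-1}
\]
for $K_{r+1}$-free graphs, in the spirit of the Wocjan--Elphick chromatic bound. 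This is exact for $T_r(n)$: there $s^+=(1-1/r)^2n^2$ and $s^-=(r-1)(n/r)^2$, so $s^-=s^+/(r-1)$. Given such an inequality,
\[
2m=s^++s^-\ge \frac{r}{r-1}\,s^+=\left(1-\frac1r\right)n^2-o(n^2),
\]
and Erd\H{o}s--Simonovits finishes the proof. The balanced part sizes then follow, since the edit-closeness bound forces $\sum_{i<j}|V_i||V_j|\ge t_r(n)-o(n^2)$, which in turn forces $|V_i|=n/r+o(n)$.

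\textbf{Step 2 (obtaining the $s^-$ bound).} This is the main obstacle. The Wocjan--Elphick inequality $s^-\ge s^+/(\chi-1)$ uses the chromatic number, whereas we only know clique number at most $r$. We would therefore extract what we need from the proof of Theorem~\ref{thm:se_turan}.

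\begin{itemize}
\item Take unit eigenvectors $x_1,\dots,x_p$ for the positive eigenvalues and set $Z=A_+$.
\item The local harmonic inequality of \cite{LiuTangZhang2026} bounds $\ip{A}{Z}$ vertex by vertex using $K_{r+1}$-freeness, via a random $r$-partition or Motzkin--Straus-type weighting on the vectors $(\sqrt{\lambda_i}x_i(v))_i\in\mathbb{R}^p$.
\item This produces $s^+\le(1-1/r)\sum_v\|w_v\|^2$ with $w_v$ of total mass $\le n$, essentially $\sqrt{s^+}\le(1-1/r)n$.
\item Near equality then forces each inequality in the chain to be tight up to $o$-errors: the vectors $w_v$ are nearly of equal length, and the Motzkin--Straus step is nearly tight.
\end{itemize}

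\textbf{Step 3 (near-equality and induction on $r$).} If the inequality from Step~2 cannot be obtained cleanly, we follow the overview instead. Near-tightness in the random-partition step should produce a set $V_r$ of size $n/r+o(n)$ that is nearly independent and almost completely joined to its complement. We then delete $o(n^2)$ edges to make $V_r$ independent and use \eqref{eq:se_edit_transfer} to keep the hypothesis. Next we argue that $G_1[V\setminus V_r]$ is $K_r$-free up to $o(n^2)$ edits; this holds because a $K_r$ there together with a common neighbour in $V_r$ would form a $K_{r+1}$, so the clique removal lemma (Lemma~\ref{lem:cl_removal}) applies.

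Finally, we show its positive square energy is near $(1-\tfrac1{r-1})(n-|V_r|)$. For this we compare $A$ with the block form in which $V_r$ is joined completely, using \eqref{eq:Frob_2} and the variational formula \eqref{eq:Frob_3} to split $s^+$ across the join. Induction, with the base case $r=1$ trivial or $r=2$ via the triangle-free case, then gives the remaining $r-1$ parts.

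We expect the hardest point to be extracting the nearly independent, nearly universally joined $V_r$ from near-equality. We would first try doing so by averaging over the random partition and applying Markov's inequality to the per-vertex slack.
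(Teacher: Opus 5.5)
Your primary route (Steps~1--2) fails, because no inequality $s^-(G)\ge s^+(G)/(r-1)-o(n^2)$ holds for all $K_{r+1}$-free graphs, already for $r=2$. The cycle $C_7$ is triangle-free with $s^+(C_7)=4+8\cos^2(2\pi/7)\approx7.110>7=m(C_7)$, so $s^-(C_7)\approx6.890<s^+(C_7)$. Its balanced blow-up $C_7[t]$, with adjacency matrix $A(C_7)\otimes J_t$, is triangle-free on $n=7t$ vertices and has $s^+-s^-\approx0.22\,t^2=\Theta(n^2)$. The Wocjan--Elphick bound $s^-\ge s^+/(\chi-1)$ genuinely needs $\chi$; note $\chi(C_7)=3>\omega(C_7)=2$. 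In the regime of the lemma, the inequality you want is equivalent to $m\ge t_r(n)-o(n^2)$. By Erd\H{o}s--Simonovits this is in turn equivalent to the conclusion itself, so Step~1 only restates the problem. Step~2 gives no mechanism for it either: the Liu--Tang--Zhang argument yields information about $A_+$ and the separation probabilities $q_{uv}$, not about $s^-$ or $m$.

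Your fallback Step~3 has the right shape; it is the paper's induction. But every substantive step is left as ``should'', and the missing content is how near-equality becomes structure. The paper sets $X=A_+$, $M=X\circ X$ (doubly nonnegative by Schur), $T=s^+(G_1)$ and $S=\sum_{uv\in E}|X_{uv}|$. It then uses the chain $T^2/4\le S^2\le\overline W H\le(1-\frac1r)^2n^2T/4$, where $\overline W=\sum_{uv\in E}q_{uv}X_{uv}^2$ is bounded via Lemma~\ref{lem:dnn_partition}. Near-equality makes every link tight, and with the near-tight summed local harmonic inequality this gives $\sum_v|\mu_v-(1-\frac1r)^2n|=o(n^2)$ for $\mu_v=(M\one)_v$. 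Markov is applied to the outcome-level deficit $\Delta(\cP)$, not to per-vertex slack, which does not single out an outcome. This gives an $r$-block outcome with $\Delta(\cP)=o(n^2)$. The identity $\Delta(\cP)=N_{\cP}(M)+\frac1{2r}\sum_{i<j}\norm{g_i-g_j}^2$, with $M=R^{\mathsf T}R$ and $g_i=R\one_{B_i}$, forces $g_i\approx R\one/r$; with the $\mu_v$ estimate this yields $|B_i|=n/r+o(n)$. The last block is then exactly independent, being the common neighbourhood of the $(r-1)$-clique of pivots. The almost complete join follows from $e(C,U)\ge(\sum X_{wu})^2/\sum X_{wu}^2$. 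Finally, the induction needs a \emph{lower} bound on $s^+(G_1[U])$, and comparing with the complete join via \eqref{eq:Frob_2} points the wrong way. It only gives the upper bound $|C||U|+2(1+\sqrt{|C|/|U|})\,e(G_1[U])$ on the join, which involves $e$ rather than $s^+$ and is not tight at $T_r(n)$ for $r\ge3$. The paper instead inserts $tY$, with $Y$ the principal submatrix of $X$ on $U$, into \eqref{eq:Frob_3}. It uses $\norm{Y}_F^2=(1-\frac1r)^2T+o(n^2)$ and $\ip{A(G_1[U])}{Y}=\frac{r-2}{r}T+o(n^2)$, both consequences of the block estimates above. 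Without these ingredients Step~3 is an outline rather than a proof.
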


\subsection*{The Caro--Wei  process}

We apply the following process to the $K_{r+1}$-free graph $G_1$ from Lemma~\ref{prop:se_stability}. Write $V=V(G_1)$ and $E=E(G_1)$. We recall precisely the part of Liu--Tang--Zhang~\cite[see Definition 1.5]{LiuTangZhang2026} that is needed. Choose a uniformly random ordering of $V$ and put $S_1=V$.
For $i\ge1$, while $S_i\ne\varnothing$, let $x_i$ be the
first vertex of $S_i$ in the chosen ordering, and set
\[
S_{i+1}=S_i\cap N_{G_1}(x_i),
\qquad
B_i=S_i\setminus S_{i+1}.
\]
The vertices $x_1,x_2,\ldots$ are called the pivots, and the nonempty sets $B_1,B_2,\ldots$ are called the blocks. The sets $B_i$ form a random partition $\cP$ of $V$. Its pivots $x_i$ form a clique, so every outcome has at most $r$ blocks.  For
$v\in V$ and $uv\in E$, put
\[
 p_v=\Pr(v\text{ is a pivot}),
 \qquad
 q_{uv}=\Pr(u,v\text{ lie in different blocks}).
\]
These probabilities are positive.  Indeed, $v$ is a pivot if it is first in the whole ordering, and the edge $uv$ is separated if $u$ is first in the whole ordering.  Liu, Tang, and Zhang prove the local harmonic inequality \cite[Lemma~1.6]{LiuTangZhang2026}
\begin{equation}
 \frac1{p_v}+\sum_{u\in N_{G_1}(v)}\frac1{q_{uv}}\leq n
 \qquad(v\in V).
 \label{eq:local_harmonic}
\end{equation}

A real symmetric matrix $M$ is \emph{doubly nonnegative} if $M\succeq0$ and $M$ is entrywise nonnegative.  For a partition
$\mathcal Q=\{Q_1,\ldots,Q_k\}$ of $V$, define its separated $M$-mass by
\[
 W_{\mathcal Q}(M)
 :=
 \sum_{\substack{uv\in E\\
                   u,v\text{ lie in distinct blocks of }\mathcal Q}}
 M_{uv},
\]
where the sum is over unordered edges. \\
Let $\mathbf{1}$ denote the all-ones vector of the appropriate order. We use the following estimate for doubly nonnegative matrices; see Liu, Tang, and Zhang~\cite[proof of Lemma~1.4, equation~(3.2)]
{LiuTangZhang2026}.
\begin{lemma}
\label{lem:dnn_partition}
Let $M$ be a doubly nonnegative matrix indexed by $V$, let $T=\one^{\mathsf T}M\one$, and let $\mathcal{Q}=\{Q_1,\ldots,Q_k\}$ be any partition.  Then
\[
 W_{\mathcal{Q}}(M)\leq\frac12\left(1-\frac1k\right)T.
\]
\end{lemma}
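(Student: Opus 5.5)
We write $W_{\mathcal Q}(M)$ as a quadratic-form expression in the block indicator vectors of $\mathcal Q$ and then use positive semidefiniteness of $M$. For $1\le i\le k$ let $\chi_i\in\{0,1\}^V$ be the indicator vector of $Q_i$, so that $\sum_i\chi_i=\one$.

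\textbf{Step 1: reduce to a sum over all separated pairs.} Since $M$ is entrywise nonnegative, restricting the sum to edges of $G_1$ can only decrease it. Hence
\[
 W_{\mathcal Q}(M)\le \sum_{\substack{\{u,v\}:\ u,v\text{ in distinct blocks}}} M_{uv}
 =\frac12\sum_{i\ne j}\chi_i^{\mathsf T}M\chi_j .
\]
Here the sum on the left runs over unordered pairs, and the factor $\frac12$ on the right accounts for the ordered pairs $(i,j)$ counting each unordered pair twice; symmetry of $M$ makes the two orders equal.

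\textbf{Step 2: split off the diagonal blocks.} Expanding $\one^{\mathsf T}M\one=\sum_{i,j}\chi_i^{\mathsf T}M\chi_j$ gives
\[
 \sum_{i\ne j}\chi_i^{\mathsf T}M\chi_j = T-\sum_{i=1}^k \chi_i^{\mathsf T}M\chi_i .
\]
It therefore suffices to prove
\[
 \sum_{i=1}^k \chi_i^{\mathsf T}M\chi_i\ \ge\ \frac{T}{k}.
\]

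\textbf{Step 3: the key inequality.} This is the only step that uses $M\succeq0$. The map $(x,y)\mapsto x^{\mathsf T}My$ is a positive semidefinite bilinear form, so $\|x\|_M:=\sqrt{x^{\mathsf T}Mx}$ is a seminorm. Applying the triangle inequality and then Cauchy--Schwarz,
\[
 \sqrt T=\Bigl\|\sum_i\chi_i\Bigr\|_M\le\sum_{i=1}^k\|\chi_i\|_M\le\sqrt{k\sum_{i=1}^k\chi_i^{\mathsf T}M\chi_i}.
\]
Squaring yields $\sum_i\chi_i^{\mathsf T}M\chi_i\ge T/k$.

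Alternatively, one can argue directly with the matrix
\[
 Y_{ij}=\Bigl(\chi_i-\frac1k\one\Bigr)^{\mathsf T}M\Bigl(\chi_j-\frac1k\one\Bigr).
\]
Then $Y_{ii}\ge0$, and $\sum_i Y_{ii}=\sum_i\chi_i^{\mathsf T}M\chi_i-\frac{T}{k}$, which gives the same conclusion.

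\textbf{Step 4: combine.} Substituting Step 3 into Step 2, and that into Step 1, gives
\[
 W_{\mathcal Q}(M)\le\frac12\Bigl(T-\frac{T}{k}\Bigr)=\frac12\Bigl(1-\frac1k\Bigr)T.
\]

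The only place positive semidefiniteness is needed is Step 3, and entrywise nonnegativity is used only in Step 1. Both steps are routine, so I expect no real obstacle in this proof.
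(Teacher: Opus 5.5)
Your proof is correct and is essentially the argument behind the paper: the paper only cites Liu--Tang--Zhang for this lemma, but its later identity \eqref{eq:delt_P_gdiff} (with $M=R^{\mathsf T}R$ and $g_i=R\one_{Q_i}$) writes the deficit $\frac12(1-\frac1k)T-W_{\mathcal Q}(M)$ as $N_{\mathcal Q}(M)+\frac1{2k}\sum_{i<j}\norm{g_i-g_j}^2$, and these two nonnegative terms are exactly the slack you discard in Step~1 and in Step~3. The only difference is that the paper keeps this slack as an exact identity, which it needs later to get $\norm{g_i-h/r}=o(n)$ from a small deficit; your inequality chain throws it away, which is harmless for the lemma itself.
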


\subsection*{Proof of Lemma \ref{prop:se_stability}}
For matrices $Z_1$ and $Z_2$ of the same order, let $Z_1\circ Z_2$ denote their Hadamard product. We shall prove Lemma \ref{prop:se_stability} in a series of claims. 
\begin{proof}
We proceed by induction on $r$.  
Let
\begin{equation*}
 A=A_+-A_-,
 \qquad
 A_+,A_-\succeq0,
 \qquad
 A_+A_-=A_-A_+=0
\end{equation*}
be the positive--negative spectral decomposition of $A=A(G_1)$ and define
\[
X=A_+, \qquad 
 M=X\circ X,
 \qquad
 T=\norm{X}_F^2=s^+(G_1),
 \qquad
 S=\sum_{uv\in E}|X_{uv}|.
\]
The Schur product theorem gives $M\succeq0$, and $M$ is entrywise nonnegative.  Moreover,
\begin{equation}
 \one^{\mathsf T}M\one
 =\sum_{u,v}X_{uv}^2
 =\norm{X}_F^2=T.
 \label{eq:M_total}
\end{equation}
Note that, $AX=X^2$.  Therefore
\begin{equation}
 T=\tr(AX)=2\sum_{uv \in E}X_{uv}\leq2S.
 \label{eq:T_less_2S}
\end{equation}

For the random Caro--Wei partition, put
\[
 B=\sum_v\frac1{p_v},
 \qquad
 H=\sum_{uv\in E}\frac1{q_{uv}},
 \qquad
 B'=\sum_v p_v,
\]
and
\[
 \overline W
 :=\mathbb E W_{\cP}(M)
 =\sum_{uv\in E}q_{uv}X_{uv}^2.
\]

\begin{claim}\label{cl:se_S_near}
    \begin{equation*}
 S=\frac T2+o(n^2).
\end{equation*}
\end{claim}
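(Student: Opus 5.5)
The plan is to sandwich $S$ between $T/2$ and $T/2+o(n^2)$. The lower bound $S\ge T/2$ is already \eqref{eq:T_less_2S}. For the upper bound I will rerun the Liu--Tang--Zhang chain of inequalities and use the near-extremal hypothesis $\sqrt{T}=(1-\frac1r)n-o(n)$ to see that every step is nearly tight.

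\emph{Step 1: Cauchy--Schwarz against the separation probabilities.} Since each $q_{uv}>0$, write
\[
 S=\sum_{uv\in E}\bigl(\sqrt{q_{uv}}\,|X_{uv}|\bigr)\cdot\frac{1}{\sqrt{q_{uv}}}.
\]
Cauchy--Schwarz then gives
\[
 S^2\le\Bigl(\sum_{uv\in E}q_{uv}X_{uv}^2\Bigr)\Bigl(\sum_{uv\in E}\frac1{q_{uv}}\Bigr)=\overline W\,H.
\]

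\emph{Step 2: bound $\overline W$.} Every outcome of $\cP$ has at most $r$ blocks, since the pivots form a clique and $G_1$ is $K_{r+1}$-free. Apply Lemma~\ref{lem:dnn_partition} to the doubly nonnegative matrix $M=X\circ X$, using \eqref{eq:M_total}. Since $1-\frac1k\le1-\frac1r$ for $k\le r$, this gives $W_{\cP}(M)\le\frac12(1-\frac1r)T$ pointwise. Taking expectations, $\overline W\le\frac12(1-\frac1r)T$.

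\emph{Step 3: bound $H$.} Sum the local harmonic inequality \eqref{eq:local_harmonic} over all $v$; each edge is counted twice, so $B+2H\le n^2$. Next, $B'=\sum_v p_v$ is the expected number of pivots, which equals the expected number of blocks, so $B'\le r$. Cauchy--Schwarz gives $B\,B'\ge n^2$, hence $B\ge n^2/r$. Therefore
\[
 H\le\tfrac12\Bigl(1-\tfrac1r\Bigr)n^2.
\]

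\emph{Step 4: combine.} Steps 1--3 give
\[
 S^2\le\tfrac14\Bigl(1-\tfrac1r\Bigr)^2n^2\,T,
 \qquad\text{so}\qquad
 S\le\tfrac12\Bigl(1-\tfrac1r\Bigr)n\sqrt T .
\]
By hypothesis $(1-\frac1r)n=\sqrt T+o(n)$, and $\sqrt T=O(n)$. Hence
\[
 S\le\tfrac12\sqrt T\bigl(\sqrt T+o(n)\bigr)=\tfrac T2+o(n^2).
\]
Together with $S\ge T/2$ from \eqref{eq:T_less_2S}, this proves the claim.

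I expect no serious obstacle. The only points needing care are the two facts used in Steps 2--3: that $B'$ equals the expected number of blocks (each block has exactly one pivot), and that the pointwise bound from Lemma~\ref{lem:dnn_partition} survives taking expectations. For this it matters that the bound $\frac12(1-\frac1r)T$ is uniform over all outcomes of $\cP$.
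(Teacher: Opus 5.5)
Your proposal is correct and follows the paper's proof essentially step for step. It uses the weighted Cauchy--Schwarz bound $S^2\le\overline W H$, the bound $\overline W\le\frac12(1-\frac1r)T$ from Lemma~\ref{lem:dnn_partition}, and the bound $H\le\frac12(1-\frac1r)n^2$ obtained by summing \eqref{eq:local_harmonic} with $B'\le r$ and $BB'\ge n^2$, then sandwiches $S$ using $T\le 2S$ and the near-extremal hypothesis $\sqrt T=(1-\frac1r)n-o(n)$.
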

\begin{proof}
Summing \eqref{eq:local_harmonic} over the vertices gives
\begin{equation}
 B+2H\leq n^2.
 \label{eq:se_BH}
\end{equation}
The number of pivots is at most $r$, hence
\begin{equation}
 B'\leq r.
 \label{eq:B_prime_upper}
\end{equation}
Cauchy--Schwarz applied to $(\sqrt{p_v})_v$ and
$(1/\sqrt{p_v})_v$ gives
\begin{equation}
 BB'\geq n^2.
 \label{eq:BB_prime}
\end{equation}
It follows from \eqref{eq:se_BH}--\eqref{eq:BB_prime} that
\begin{equation}
 H\leq\frac{\left(1-\frac{1}{r}\right) n^2}{2}.
 \label{eq:H_upper}
\end{equation}

Every outcome of $\cP$ has at most $r$ blocks.  Thus
Lemma \ref{lem:dnn_partition} and \eqref{eq:M_total} imply
\begin{equation}
 \overline W\leq\frac{\left(1-\frac{1}{r}\right)T}{2}.
 \label{eq:W_upper}
\end{equation}
Cauchy--Schwarz gives
\begin{equation}
 S^2
 \leq
 \left(\sum_{uv\in E}q_{uv}X_{uv}^2\right)
 \left(\sum_{uv\in E}\frac1{q_{uv}}\right)
 =\overline WH.
 \label{eq:weighted_CS}
\end{equation}
Combining \eqref{eq:T_less_2S}, \eqref{eq:H_upper},
\eqref{eq:W_upper}, and \eqref{eq:weighted_CS}, we obtain
\begin{equation}
 \frac{T^2}{4}
 \leq S^2
 \leq\overline WH
 \leq\frac{\left(1-\frac{1}{r}\right)^2n^2T}{4}.
 \label{eq:full_chain}
\end{equation}

The hypothesis says 
\begin{equation}\label{eq:T_close}
   \sqrt{T}=\left(1-\frac{1}{r}\right)n-o(n). 
\end{equation} 
Since $r\ge2$ is fixed, \eqref{eq:T_close} gives
\[
\sqrt{T}=\left(1-\frac1r+o(1)\right)n=\Theta(n),
\qquad
T=\left(\left(1-\frac1r\right)^2+o(1)\right)n^2
=\Theta(n^2).
\]
In particular, $\sqrt{T}=O(n)$ and $T=\Omega(n^2)$.
Taking square roots in \eqref{eq:full_chain} and then using \eqref{eq:T_close}, we obtain
\begin{align*}
\frac{T}{2}
&\le S
\le \frac{\left(1-\frac1r\right)n\sqrt{T}}{2}\\
&=\frac{(\sqrt{T}+o(n))\sqrt{T}}{2}\\
&=\frac{T}{2}+\frac{\sqrt{T}\,o(n)}{2}\\
&=\frac{T}{2}+o(n^2),
\end{align*}
where the last equality uses $O(n)o(n)=o(n^2)$.
Therefore,
\begin{equation*}
S=\frac{T}{2}+o(n^2).  \qedhere
\end{equation*}
\end{proof}
\begin{claim}\label{cl:WH_close}
\begin{equation*}
 \overline W=\frac{\left(1-\frac{1}{r}\right)T}{2}+o(n^2),  \quad H=\frac{\left(1-\frac{1}{r}\right)n^2}{2}+o(n^2).
\end{equation*}
\end{claim}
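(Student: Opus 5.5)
The plan is to squeeze the chain \eqref{eq:full_chain} from both sides, using Claim~\ref{cl:se_S_near} and the hypothesis \eqref{eq:T_close}. Write $\alpha=1-\frac1r$. We already know $S^2\le \overline W H$, $\overline W\le \alpha T/2$ by \eqref{eq:W_upper}, and $H\le \alpha n^2/2$ by \eqref{eq:H_upper}. Claim~\ref{cl:se_S_near} gives $S\ge T/2$, hence $S^2\ge T^2/4$.

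\textbf{Lower bound for $\overline W$.} Dividing $S^2\le\overline W H$ by the bound on $H$ gives
\[
\overline W\ \ge\ \frac{S^2}{H}\ \ge\ \frac{T^2/4}{\alpha n^2/2}=\frac{T^2}{2\alpha n^2}.
\]
By \eqref{eq:T_close} we have $T=\alpha^2n^2+o(n^2)$. Therefore $T^2/(2\alpha n^2)=\frac{T}{2\alpha n^2}\cdot T$ with $\frac{T}{\alpha n^2}=\alpha+o(1)$. This gives $\overline W\ge \alpha T/2-o(1)\cdot T=\alpha T/2-o(n^2)$, using $T=O(n^2)$. Combined with \eqref{eq:W_upper}, this yields the first asymptotic equality.

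\textbf{Lower bound for $H$.} Symmetrically,
\[
H\ \ge\ \frac{S^2}{\overline W}\ \ge\ \frac{T^2/4}{\alpha T/2}=\frac{T}{2\alpha}=\frac{\alpha n^2}{2}+o(n^2),
\]
again using $T=\alpha^2n^2+o(n^2)$. Together with \eqref{eq:H_upper}, this gives the second equality.

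\textbf{Potential obstacle.} The only point needing care is that $\overline W$ and $H$ are positive, so the divisions are legitimate. For $H$ this holds because $T=\Theta(n^2)>0$ forces $E\neq\varnothing$, and each $q_{uv}>0$. For $\overline W$ it follows from $\overline W H\ge S^2\ge T^2/4>0$. Beyond this, each step is an elementary consequence of $T=\Theta(n^2)$ with the specific asymptotic value $\alpha^2n^2$. So I expect no real obstacle: the claim is simply that near-equality at the ends of \eqref{eq:full_chain} forces near-equality in each intermediate factor.
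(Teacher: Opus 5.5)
Your proposal is correct and follows the paper's argument: both lower bounds come from squeezing the weighted Cauchy--Schwarz inequality $S^2\le\overline W H$ against the upper bounds \eqref{eq:W_upper} and \eqref{eq:H_upper}, using $T=(1-\frac1r)^2n^2+o(n^2)$. The differences are cosmetic. You use the exact inequality $S\ge T/2$ from \eqref{eq:T_less_2S} where the paper uses $S^2=T^2/4+o(n^4)$ from Claim~\ref{cl:se_S_near}. You also check explicitly that $H$ and $\overline W$ are positive, which the paper leaves implicit.
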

\begin{proof}
By Claim~\ref{cl:se_S_near} and $T=\Theta(n^2)$, we have
\begin{equation*}
S^2=\frac{T^2}{4}+o(n^4).
\end{equation*}
Using \eqref{eq:weighted_CS}, \eqref{eq:H_upper}, and \eqref{eq:T_close}, we obtain
\[
\overline{W}
\ge \frac{S^2}{H}
\ge \frac{2S^2}{(1-1/r)n^2}
\ge \frac{(1-1/r)T}{2}-o(n^2).
\]
Together with \eqref{eq:W_upper}, this gives
$$\overline{W}=\frac{(1-1/r)T}{2}+o(n^2).$$

Similarly, \eqref{eq:weighted_CS} and \eqref{eq:W_upper} give
$$H\ge \frac{S^2}{\overline{W}} \ge \frac{2S^2}{(1-1/r)T} \ge \frac{(1-1/r)n^2}{2}-o(n^2),$$
where division of an $o(n^4)$ term by $T=\Theta(n^2)$
produces an $o(n^2)$ term.
Combining this with \eqref{eq:H_upper}, we obtain
\begin{equation*}
H=\frac{(1-1/r)n^2}{2}+o(n^2). \qedhere
\end{equation*}
\end{proof}
Claim~\ref{cl:WH_close}, together with \eqref{eq:se_BH}, \eqref{eq:B_prime_upper}, and
 \eqref{eq:BB_prime}, now implies
\begin{equation}
 B=\frac{n^2}{r}+o(n^2),
 \qquad
 B'=r-o(1).
 \label{eq:BB_prime_close}
\end{equation}
 
For $v\in V(G_1)$, define $ \mu_v=(M\one)_v$.
\begin{claim}\label{cl:mu_close}
    For the graph $G_1$, we must have $\sum \limits_v|\mu_v-\left(1-\frac{1}{r}\right)^2n|=o(n^2).$
\end{claim}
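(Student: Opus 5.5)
The plan is to localize the chain \eqref{eq:full_chain} at each vertex and then use near-equality in two Cauchy--Schwarz inequalities. Put $c=1-\frac1r$ and, for $v\in V$,
\[
 w_v=\sum_{u\in N_{G_1}(v)}q_{uv}X_{uv}^2,\qquad
 h_v=\sum_{u\in N_{G_1}(v)}\frac1{q_{uv}} .
\]
Then $\sum_v w_v=2\overline W$ and $\sum_v h_v=2H$. The identity $AX=X^2$ gives
\[
 \mu_v=\sum_u X_{uv}^2=(X^2)_{vv}=(AX)_{vv}=\sum_{u\in N_{G_1}(v)}X_{uv}.
\]
So if $v$ is isolated, then $\mu_v=h_v=w_v=0$. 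For every $v$, Cauchy--Schwarz gives $\mu_v^2\le w_vh_v$, exactly as in \eqref{eq:weighted_CS} but at a single vertex.

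\textbf{Step 1: $h_v$ is close to $cn$ in $L^1$.} By \eqref{eq:local_harmonic} we have $0\le h_v\le n-1/p_v$. Summing, $\sum_v(n-1/p_v-h_v)=n^2-B-2H$. By Claim~\ref{cl:WH_close} and \eqref{eq:BB_prime_close} this is $o(n^2)$, so $\sum_v|h_v-(n-1/p_v)|=o(n^2)$.

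It remains to show $\sum_v|1/p_v-n/r|=o(n^2)$. For this I would use the defect form of the Cauchy--Schwarz inequality \eqref{eq:BB_prime}:
\[
 BB'-n^2=\tfrac12\sum_{u,v}\left(\sqrt{p_u/p_v}-\sqrt{p_v/p_u}\right)^2 .
\]
By \eqref{eq:BB_prime_close} the left side is $o(n^2)$. One then checks that $\sum_v p_v=B'=r-o(1)$, together with $1/p_v\le n$, forces $1/p_v=n/r+o(n)$ for all but $o(n)$ vertices. Since each term $|1/p_v-n/r|$ is at most $n$, the $L^1$ bound follows.

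I expect this step to be the main technical obstacle. The delicate part is converting pairwise closeness of ratios into closeness to the specific value $n/r$. A workable route is to compare with the median of the $p_v$ and then use $B'\approx r$ to identify the median as $r/n+o(1/n)$. Combining the two bounds gives $\sum_v|h_v-cn|=o(n^2)$.

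\textbf{Step 2: $\mu_v/h_v$ is nearly constant.} For non-isolated $v$ we have $w_v\ge\mu_v^2/h_v$. Put $\kappa=T/(2H)$. Expanding the square and using $\sum_v\mu_v=T$ gives
\[
 \sum_{h_v>0}h_v\left(\frac{\mu_v}{h_v}-\kappa\right)^2
 =\sum_{h_v>0}\frac{\mu_v^2}{h_v}-\frac{T^2}{2H}
 \le 2\overline W-\frac{T^2}{2H}.
\]
By Claim~\ref{cl:WH_close} and $T=\Theta(n^2)$, we have $2\overline W=cT+o(n^2)$ and $T^2/(2H)=cT+o(n^2)$. Hence the right side is $o(n^2)$, and also $\kappa=c+o(1)$, because $T=c^2n^2+o(n^2)$.

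Another application of Cauchy--Schwarz then yields
\[
 \sum_v|\mu_v-\kappa h_v|\le\Bigl(\sum_v h_v\Bigr)^{1/2}\Bigl(\sum_{h_v>0}h_v\bigl(\tfrac{\mu_v}{h_v}-\kappa\bigr)^2\Bigr)^{1/2}=\sqrt{O(n^2)\,o(n^2)}=o(n^2).
\]
Isolated vertices contribute $0$ to both sides.

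\textbf{Step 3: conclusion.} By the triangle inequality,
\[
 \sum_v|\mu_v-c^2n|\le\sum_v|\mu_v-\kappa h_v|+\kappa\sum_v|h_v-cn|+n\cdot|\kappa-c|\,cn .
\]
By Steps 1 and 2 and $\kappa=c+o(1)$, each of the three terms is $o(n^2)$. This proves Claim~\ref{cl:mu_close}.
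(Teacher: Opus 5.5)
Your proof is correct, but both main steps take a different route from the paper.

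In Step~1 the paper avoids the pairwise-defect and median argument entirely. With $\tau_v=1/p_v$ and $a=n/r$, it uses $\sum_v(\tau_v-a)^2/\tau_v=B-2an+a^2B'$, which is $o(n^2)$ directly by \eqref{eq:BB_prime_close}. Cauchy--Schwarz against $\sum_v\tau_v=B\le n^2$ then gives $\sum_v|\tau_v-a|=o(n^2)$. Since this measures distance to the target value $n/r$ itself, it needs no exceptional set, no diagonal argument in a tolerance, and no identification of a median.

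Your route does work, with one caveat: $B'\approx r$ alone only bounds the median $\tilde p$ from above. The lower bound comes from $B$. For a fixed $\eta>0$, all but $o(n)$ vertices have $p_v\le(1+\eta)\tilde p$, so $B\ge(n-o(n))/((1+\eta)\tilde p)$, and $B=n^2/r+o(n^2)$ then forces $\tilde p\ge (r/n)(1-o(1))/(1+\eta)$. Alternatively, you can show via the defect sum that vertices with abnormally large $p_v$ carry only $o(1)$ of $B'$.

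In Step~2 the paper works edge by edge. Near-equality in $S^2\le\overline WH$ with $\kappa=S/H$ gives $\sum_{uv\in E}\bigl||X_{uv}|-\kappa/q_{uv}\bigr|=o(n^2)$. The sign defect $\sum_{uv\in E}(|X_{uv}|-X_{uv})=S-T/2$ is controlled separately by Claim~\ref{cl:se_S_near}. Summing over neighbourhoods then yields $\sum_v|\mu_v-\kappa L_v|=o(n^2)$.

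Your vertex-localized inequality $\mu_v^2\le w_vh_v$, together with the exact identity $\sum_v\mu_v=\tr(X^2)=T$ and $\kappa=T/(2H)$, is somewhat more economical. The sign issue is absorbed by $0\le\mu_v\le\sum_u|X_{uv}|$, and you prove only the vertex-level statement, which is all the later claims use. The paper's version additionally gives the edge-level approximation $|X_{uv}|\approx\kappa/q_{uv}$, but nothing afterwards needs it.
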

\begin{proof}
Set 
\[
 \tau_v=\frac1{p_v},
 \qquad a=\frac nr.
\]
By \eqref{eq:BB_prime_close}, we get
\begin{align*}
 \sum_v\frac{(\tau_v-a)^2}{\tau_v}
 &=\sum_v\left(\tau_v-2a+\frac{a^2}{\tau_v}\right)\\
 &=B-2an+a^2B'=\frac{n^2}{r}+o(n^2)-2\frac{n^2}{r}+\frac{n^2}{r}-\frac{n^2}{r^2}o(1)
 =o(n^2).
\end{align*}
Cauchy--Schwarz and the bound $B \leq n^2$ give
\begin{equation*}
 \sum_v|\tau_v-a|
 \leq
 \left(\sum_v\frac{(\tau_v-a)^2}{\tau_v}\right)^{1/2}
 \left(\sum_v\tau_v\right)^{1/2}
 =o(n^2).
\end{equation*}
Let 
\[
 L_v=\sum_{u\in N_{G_1}(v)}\frac1{q_{uv}},
 \qquad
 \sigma_v=n-\tau_v-L_v.
\]
By \eqref{eq:local_harmonic}, $\sigma_v\geq0$, and by
\eqref{eq:se_BH}, Claim~\ref{cl:WH_close}, and \eqref{eq:BB_prime_close},
\[
 \sum_v\sigma_v=n^2-B-2H=o(n^2).
\]
Since $L_v-\left(1-\frac{1}{r}\right)n=a-\tau_v-\sigma_v$, we obtain
\begin{equation}
 \sum_v|L_v-\left(1-\frac{1}{r}\right)n|\leq \sum_v (|\tau_v-a|+\sigma_v)=o(n^2).
 \label{eq:L_close}
\end{equation}

Put
\[
\kappa=\frac{S}{H}.
\]
By \eqref{eq:T_close}, Claim~\ref{cl:se_S_near} and Claim~\ref{cl:WH_close}, 
$\kappa=\left(1-\frac{1}{r}\right)+o(1).$
Expanding the square we obtain,
\begin{align}
 \sum_{uv\in E}q_{uv}
 \left(|X_{uv}|-\frac{\kappa}{q_{uv}}\right)^2
 &=\sum_{uv\in E}q_{uv}
 \left(|X_{uv}|^2+\frac{\kappa^2}{q_{uv}^2}-2|X_{uv}|\frac{\kappa}{q_{uv}}\right) \notag \\
 &=\overline W+\kappa^2H-2\kappa S=\overline W-\frac{S^2}{H} \notag \\
 &=\overline W-\kappa S    \notag \\
 &= \frac{\left(1-\frac{1}{r}\right)T}{2}+o(n^2) -\left(\left(1-\frac{1}{r}\right)+o(1)\right)\bigg(\frac{T}{2}+o(n^2)\bigg) \notag \\
 &=o(n^2)
 \label{eq:qv_close}
\end{align}
where the second last equality follows from Claim~\ref{cl:se_S_near} and Claim~\ref{cl:WH_close}.
Applying Cauchy--Schwarz and using \eqref{eq:qv_close} and 
$H=O(n^2)$ gives
\begin{align}
 \sum_{uv\in E}
 \left||X_{uv}|-\frac{\kappa}{q_{uv}}\right| &\leq \sqrt{\sum_{uv\in E} q_{uv}\left  (\left||X_{uv}|-\frac{\kappa}{q_{uv}}\right|^2 \right)}\sqrt{\sum_{uv\in E}\frac{1}{q_{uv}}}  \notag \\
 &=o(n^2).
 \label{eq:X_uv_close}
\end{align}
Also, by \eqref{eq:T_less_2S} and Claim~\ref{cl:se_S_near},
\begin{equation}
 \sum_{uv\in E}(|X_{uv}|-X_{uv})
 =S-\frac T2=o(n^2).
 \label{eq:X_uv_diff}
\end{equation}

 Since $AX=X^2$,
\begin{equation*}
 \mu_v=\sum_uX_{vu}^2=(X^2)_{vv}=(AX)_{vv}
 =\sum_{u\in N_{G_1}(v)}X_{uv}.
\end{equation*}
Equations \eqref{eq:X_uv_close} and \eqref{eq:X_uv_diff}, counted at
the two endpoints of every edge, give
\begin{align*}
\sum_v|\mu_v-\kappa L_v|
&=\sum_v
  \left|
  \sum_{u\in N_{G_1}(v)}
  \left(X_{uv}-\frac{\kappa}{q_{uv}}\right)
  \right|\\
&\le \sum_v\sum_{u\in N_{G_1}(v)}
  \left|X_{uv}-\frac{\kappa}{q_{uv}}\right|\\
&\le \sum_v\sum_{u\in N_{G_1}(v)}
  \left(
  |X_{uv}|-X_{uv}
  +\left||X_{uv}|-\frac{\kappa}{q_{uv}}\right|
  \right)\\
&=2\sum_{\{u,v\}\in E}
  \bigl(|X_{uv}|-X_{uv}\bigr)\\
&\quad
  +2\sum_{uv \in E}
  \left||X_{uv}|-\frac{\kappa}{q_{uv}}\right|\\
&=o(n^2).
\end{align*}

Furthermore, \eqref{eq:L_close} and $\kappa=\left(1-\frac{1}{r}\right)+o(1)$ give
\begin{align*}
 \sum_v \left|\kappa L_v-\left(1-\frac{1}{r}\right)^2n\right|&=\sum_v \left|\kappa\left(L_v-\left(1-\frac{1}{r}\right) n\right)+\left(1-\frac{1}{r}\right)n\left(\kappa-\left(1-\frac{1}{r}\right)\right) \right|  \\
 &\leq |\kappa|\sum_v \left|L_v-\left(1-\frac{1}{r}\right)n \right|
      +\left(1-\frac{1}{r}\right) n^2\left|\kappa-\left(1-\frac{1}{r}\right)\right|\\
 &=o(n^2).
\end{align*}
Consequently,
\begin{equation*}
 \sum_v \left |\mu_v-\left(1-\frac{1}{r}\right)^2n \right |=o(n^2). \qedhere
\end{equation*}
\end{proof}
For an outcome $\cP$ of the Caro--Wei process, define its deficit by
\[
 \Delta(\cP)=\frac{\left(1-\frac{1}{r}\right)T}{2}-W_{\cP}(M)\geq0.
\]
We next show that the process can divide the vertices into exactly $r$ blocks while keeping the deficit negligible compared with $n^2$.
\begin{claim}\label{cl:delt_close}
    There is an outcome $\cP=\{B_1, \dots, B_r\}$ of the Caro--Wei process with $r$ blocks such that $\Delta(\cP)=o(n^2).$ 
\end{claim}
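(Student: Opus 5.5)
We argue by averaging over the random outcome of the Caro--Wei process. By Lemma~\ref{lem:dnn_partition}, every outcome with $k\le r$ blocks satisfies $W_{\cP}(M)\le\frac12(1-\frac1k)T\le\frac12(1-\frac1r)T$, so $\Delta(\cP)\ge0$ always. Claim~\ref{cl:WH_close} gives $\mathbb E\,\Delta(\cP)=\frac{(1-1/r)T}{2}-\overline W=o(n^2)$. Since $\Delta$ is nonnegative, Markov's inequality shows that $\Delta(\cP)\le\delta n^2$ holds with probability at least $1-o(1)/\delta$ for any fixed $\delta>0$. Hence it suffices to show that outcomes with exactly $r$ blocks occur with probability bounded away from $0$. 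We then let $\delta\to0$ slowly, or equivalently take $\delta=\sqrt{\mathbb E\Delta}/n$, so that the two events intersect.

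To bound the probability of $r$ blocks from below, we use the pivot count. The number of pivots equals the number of blocks, call it $N(\cP)\le r$. Then $\mathbb E N(\cP)=\sum_v p_v=B'=r-o(1)$ by \eqref{eq:BB_prime_close}. Because $N\le r$, we have $\mathbb E(r-N)=o(1)$, and Markov's inequality gives $\Pr(N<r)\le\mathbb E(r-N)=o(1)$. So with probability $1-o(1)$ the outcome has exactly $r$ blocks.

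We combine the two estimates with $\delta=(\mathbb E\Delta/n^2)^{1/2}=o(1)$. Then $\Pr(\Delta>\delta n^2)\le\delta=o(1)$. By a union bound, for large $n$ some outcome has exactly $r$ blocks and $\Delta(\cP)\le\delta n^2=o(n^2)$. This gives the claim.

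The only step needing care is checking that $\Delta(\cP)\ge0$ for every outcome, including those with fewer than $r$ blocks, so that Markov's inequality applies. This follows because $1-\frac1k\le1-\frac1r$ for $k\le r$ and $T\ge0$. The remaining care is in bookkeeping the $o(\cdot)$ terms uniformly along the sequence of graphs. I expect no real obstacle: the key input, $B'=r-o(1)$, is already supplied by \eqref{eq:BB_prime_close}.
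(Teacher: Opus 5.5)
Your proof is correct, but it bounds the probability of the event ``fewer than $r$ blocks'' differently from the paper.

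\textbf{The paper's route.} It works only with the random variable $\Delta(\cP)$. By Lemma~\ref{lem:dnn_partition} with $k=r-1$, every outcome with fewer than $r$ blocks satisfies
\[
\Delta(\cP)\ge \frac{T}{2}\Bigl(\frac1{r-1}-\frac1r\Bigr)=\Omega_r(n^2).
\]
For $r=2$ the paper instead notes that a one-block outcome has $W_{\cP}(M)=0$. Hence the same Markov bound on $\Delta$ that gives $\Delta(\cP)\le\sqrt{\rho_n}\,n^2$ with high probability also shows that fewer than $r$ blocks occur with probability $o(1)$.

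\textbf{Your route.} You use the fact that the number of blocks $N(\cP)$ equals the number of pivots, so $\mathbb E N(\cP)=\sum_v p_v=B'$. Combining the already-established $B'=r-o(1)$ with $N\le r$ gives $\Pr(N<r)\le\mathbb E(r-N)=o(1)$.

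\textbf{Comparison.} Your argument is slightly more uniform: it needs no separate $r=2$ case and no energy gap at this step. It also records the structural fact that the Caro--Wei partition has exactly $r$ blocks with high probability. The paper's argument stays entirely in terms of $\Delta$ and does not revisit the pivot probabilities.

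\textbf{One small point.} If $\mathbb E\Delta(\cP)=0$, your choice $\delta=(\mathbb E\Delta/n^2)^{1/2}$ is $0$. Then the Markov step $\Pr(\Delta>\delta n^2)\le\delta$ is not a valid application of Markov's inequality. You should add, as the paper does, that in this case the nonnegative variable $\Delta(\cP)$ vanishes on every outcome of positive probability.
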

\begin{proof}
By Claim~\ref{cl:WH_close},
\[
 \mathbb E\Delta(\cP)=\frac{\left(1-\frac{1}{r}\right)T}{2} - \mathbb EW_{\cP}(M)=o(n^2).
\]
If $\cP$ has at most $r-1$ blocks, then
Lemma \ref{lem:dnn_partition} gives
\[
 \Delta(\cP)
 \geq\frac{\left(1-\frac{1}{r}\right)-\left(1-\frac{1}{r-1}\right)}2T.
\]
  
From \eqref{eq:T_close} we get, $$\Delta(\cP)=\Omega_{r}(n^2)$$
when $r\geq3$.
For $r=2$, the same conclusion follows because a
one-block outcome has $W_{\cP}(M)=0$. 
Put
\[
 \rho_n=\frac{\mathbb E\Delta(\cP)}{n^2}=o(1).
\]
If $\rho_n=0$, then $\Delta(\cP)=0$ almost surely, so every outcome of
positive probability has $r$ blocks and we may choose any such outcome.
If $\rho_n>0$, Markov's inequality gives
\[
 \Pr\bigl(\Delta(\cP)>\sqrt{\rho_n}\,n^2\bigr) \leq \frac{\mathbb E\Delta(\cP)}{\sqrt{\rho_n}\,n^2}=\frac{\rho_n\,n^2}{\sqrt{\rho_n}\,n^2}
 =\sqrt{\rho_n}=o(1),
\]
whereas the preceding $\Omega_r(n^2)$ gap argument gives
$\Pr(\cP\text{ has fewer than }r\text{ blocks})=o(1)$.  Hence by union bound, an
$r$-block outcome with
\begin{equation*}
 \Delta(\cP)=o(n^2)
\end{equation*}
exists.
\end{proof}
We shall work with the outcome obtained from Claim~\ref{cl:delt_close}. For $V' \subseteq V(G_1)$ let $\one_{V'} \in \mathbb{R}^{V(G_1)}$ denote its indicator vector. Factor $M=R^{\mathsf T}R$ and put
\[
 g_i=R\one_{B_i},
 \qquad h=R\one.
\]
Define
\[
 N_{\cP}(M)=
 \sum_{\substack{u<v,\ uv\notin E\\
                  u,v\text{ in different blocks}}}M_{uv},
\]
which adds up the entries above the diagonal in $M$ corresponding to non-edges in different blocks.
\begin{claim}
    With the above notation, the following holds for $1 \leq i \leq r$,
\begin{align}
    \norm{g_i -\frac hr}&=o(n),  \label{eq:g_i-hr} \\
    \sum \limits_{v \in B_i}\mu_v&=\frac Tr +o(n^2),  \label{eq:block_mu} \\
    |B_i| &=\frac nr +o(n).  \label{eq:block_size} 
\end{align}

\end{claim}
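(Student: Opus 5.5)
The plan is to rewrite the deficit $\Delta(\cP)$ from Claim~\ref{cl:delt_close} as a sum of two nonnegative terms, one of which is exactly $\tfrac12\sum_i\norm{g_i-h/r}^2$. The three estimates then follow in turn: \eqref{eq:g_i-hr} directly, \eqref{eq:block_mu} from it by Cauchy--Schwarz, and \eqref{eq:block_size} from \eqref{eq:block_mu} together with Claim~\ref{cl:mu_close}.

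\textbf{Step 1 (deficit identity).} Since $M=R^{\mathsf T}R$, we have $\ip{g_i}{g_j}=\one_{B_i}^{\mathsf T}M\one_{B_j}$ for $i\neq j$. This is the sum of $M_{uv}$ over ordered pairs with $u\in B_i$, $v\in B_j$. Summing over $i<j$ collects every unordered pair in distinct blocks exactly once, split into edges and non-edges:
\[
 \sum_{i<j}\ip{g_i}{g_j}=W_{\cP}(M)+N_{\cP}(M).
\]
Since $\sum_i g_i=h$ and $\norm{h}^2=\one^{\mathsf T}M\one=T$ by \eqref{eq:M_total}, we get
\[
 \sum_{i<j}\ip{g_i}{g_j}=\tfrac12\Bigl(T-\sum_i\norm{g_i}^2\Bigr),
 \qquad
 \sum_i\norm{g_i}^2=\frac{T}{r}+\sum_i\norm{g_i-\tfrac hr}^2 .
\]
The second identity uses $\sum_i(g_i-h/r)=0$, so the cross terms vanish. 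Combining these yields
\[
 \Delta(\cP)=\frac{(1-\frac1r)T}{2}-W_{\cP}(M)
 =N_{\cP}(M)+\frac12\sum_{i=1}^r\norm{g_i-\tfrac hr}^2 .
\]

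\textbf{Step 2 (the three estimates).} Both terms on the right of the deficit identity are nonnegative, since $M$ is entrywise nonnegative. Claim~\ref{cl:delt_close} gives $\Delta(\cP)=o(n^2)$, hence $\sum_i\norm{g_i-h/r}^2=o(n^2)$, which is \eqref{eq:g_i-hr}. (As a by-product, $N_{\cP}(M)=o(n^2)$, which is likely useful later.)

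For \eqref{eq:block_mu}, note that $\sum_{v\in B_i}\mu_v=\one_{B_i}^{\mathsf T}M\one=\ip{g_i}{h}=T/r+\ip{g_i-h/r}{h}$. By Cauchy--Schwarz the error is at most $o(n)\sqrt T=o(n)\cdot O(n)=o(n^2)$.

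For \eqref{eq:block_size}, Claim~\ref{cl:mu_close} gives
\[
 \sum_{v\in B_i}\mu_v=|B_i|\left(1-\tfrac1r\right)^2n+o(n^2),
\]
while \eqref{eq:T_close} gives $T/r=(1-\frac1r)^2n^2/r+o(n^2)$. Comparing with \eqref{eq:block_mu} and dividing by $(1-\frac1r)^2n$, a positive constant multiple of $n$ since $r$ is fixed, gives $|B_i|=n/r+o(n)$.

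The only step requiring real care is the bookkeeping in Step 1. Diagonal entries of $M$ never appear in $\sum_{i<j}\ip{g_i}{g_j}$, and each unordered pair in distinct blocks is counted exactly once, so the identity $\Delta=N_{\cP}(M)+\tfrac12\sum_i\norm{g_i-h/r}^2$ is exact rather than only approximate. Everything after that is routine.
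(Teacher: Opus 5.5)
Your proof is correct and follows the paper's argument: the same decomposition of the deficit into $N_{\cP}(M)$ plus a nonnegative quadratic term in the $g_i$, then Cauchy--Schwarz against $h$ for the block sums of $\mu_v$, and then Claim~\ref{cl:mu_close} together with \eqref{eq:T_close} for the block sizes. The only difference is cosmetic: you write the quadratic term as $\tfrac12\sum_i\norm{g_i-h/r}^2$, while the paper writes the identical quantity as $\tfrac1{2r}\sum_{i<j}\norm{g_i-g_j}^2$ and then needs an extra Cauchy--Schwarz step to bound each $\norm{g_i-h/r}^2$; your form gives that bound directly.
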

\begin{proof}
Every diagonal entry of $M$ belongs to a block, while each unordered pair of vertices in different blocks is counted twice in $\mathbf{1}^{T}M\mathbf{1}-\sum \limits_{i=1}^r\mathbf{1}_{B_i}^{T}M\mathbf{1}_{B_i}.$

Separating these pairs into edges and non-edges gives
\begin{align*}
W_{\mathcal P}(M)+N_{\mathcal P}(M)
&=\frac12\left(
\mathbf{1}^{T}M\mathbf{1}
-\sum_{i=1}^r\mathbf{1}_{B_i}^{T}M\mathbf{1}_{B_i}
\right)\\
&=\frac12\left(T-\sum_{i=1}^r\|g_i\|^2\right).
\end{align*}
Consequently,
\begin{equation*}
W_{\mathcal P}(M)
=\frac12\left(T-\sum_{i=1}^r\|g_i\|^2\right)
-N_{\mathcal P}(M).
\end{equation*}
Therefore, 
\begin{align}
 \Delta(\cP)
 &=\frac{\left(1-\frac{1}{r}\right)T}{2}-W_{\cP}(M)   \notag \\
 &=N_{\cP}(M)+\frac 12\sum_{i=1}^r\norm{g_i}^2-\frac{T}{2r} \notag \\
 &=N_{\cP}(M)+\frac1{2r}\left(r
                 \sum_{i=1}^r\norm{g_i}^2-\norm{\sum_{i=1}^r g_i}^2\right)  \notag \\
 &=N_{\cP}(M)+\frac1{2r}\sum_{1\leq i<j\leq r}\norm{g_i-g_j}^2.
 \label{eq:delt_P_gdiff}
\end{align}

It follows from Cauchy--Schwarz, Claim~\ref{cl:delt_close} and
\eqref{eq:delt_P_gdiff} that for $1\leq i\leq r$,
\begin{align*}
 \norm{g_i-h/r}^2&=\norm{\frac{1}{r}\sum \limits_{j=1}^r (g_i-g_j)}^2 \\
 &\leq \frac1r \sum \limits_{j=1}^r \norm{g_i-g_j}^2  \\
 &\leq \frac1r \sum_{1 \leq a < b \leq r}\norm{g_a-g_b}^2\\
 &=2(\Delta(\cP)-N_{\cP}(M))  \notag \\
 &\leq 2\Delta(\cP) \\
 &=o(n^2)
\end{align*}
Thus for $1\leq i\leq r$, $\norm{g_i-h/r}=o(n)$.
Here $\norm h=\sqrt T=O(n)$.  Therefore
\begin{align*}
\left|\sum_{v\in B_i}\mu_v-\frac{T}{r}\right|
&=\left|\one_{B_i}^{\mathsf T}M\one-\frac{T}{r}\right|\\
&=\left|\one_{B_i}^{\mathsf T}R^{\mathsf T}R\one
        -\frac{\norm{h}^2}{r}\right|\\
&=\left|\ip{R\one_{B_i}}{R\one}
        -\frac{\ip{h}{h}}{r}\right|\\
&=\left|\ip{g_i}{h}-\ip{\frac{h}{r}}{h}\right|\\
&=\left|\ip{g_i-\frac{h}{r}}{h}\right|\\
&\le \norm{g_i-\frac{h}{r}}\,\norm{h}
\qquad\text{(Cauchy--Schwarz)}\\
&=o(n^2).
\end{align*}
Consequently,
\[
\sum_{v\in B_i}\mu_v=\frac{T}{r}+o(n^2).
\]
By Claim~\ref{cl:mu_close} we get,
\begin{align*}
\left |\sum_{v\in B_i} \mu_v-\left(1-\frac{1}{r}\right)^2n |B_i| \right |&\leq \sum_{v \in B_i}\left | \mu_v -\left(1-\frac{1}{r}\right)^2 n \right| \\
&\leq \sum_{v \in V(G_1)}\left | \mu_v -\left(1-\frac{1}{r}\right)^2 n \right|=o(n^2)
\end{align*}
Hence 
$$\sum_{v \in B_i} \mu_v=\left(1-\frac{1}{r}\right)^2n |B_i|+o(n^2)$$
Combining this with \eqref{eq:block_mu}, we obtain
$$\left(1-\frac{1}{r}\right)^2n |B_i|=\frac Tr +o(n^2)$$
Using \eqref{eq:T_close} we get,
\begin{equation*}
 |B_i|=\frac nr+o(n)
 \qquad(1\leq i\leq r). \qedhere
\end{equation*}
\end{proof}
We next show that the last block has no edges within it and that almost all possible edges between this block and the remaining vertices are present.
\begin{claim}\label{cl:h_0}
    Let $C$ be the last block of the outcome obtained from Claim ~\ref{cl:delt_close} and put $U=V(G_1)\setminus C$. Then $C$ is independent and $\sum \limits_{\substack{wu\in E\\w\in C,\ u\in U}}X_{wu}=\frac Tr+o(n^2)$. Moreover, if $ h_0:=|C||U|-e_{G_1}(C,U)$ then $h_0=o(n^2)$.
\end{claim}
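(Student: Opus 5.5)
The plan is to prove the three assertions in order: independence from the structure of the Caro--Wei process, the edge sum from \eqref{eq:block_mu}, and $h_0=o(n^2)$ from a Cauchy--Schwarz estimate using the vectors $g_i$.

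\textbf{Independence of $C$.} Since the outcome has exactly $r$ blocks, $C=B_r=S_r\setminus S_{r+1}$ and the process stops after $r$ steps. If $S_{r+1}=S_r\cap N_{G_1}(x_r)$ were nonempty, its first vertex would be an $(r+1)$-st pivot. Because the pivots form a clique, this would give a $K_{r+1}$, which is impossible. Hence $S_{r+1}=\varnothing$ and $C=S_r$. Every vertex of $S_r$ is adjacent to all of $x_1,\dots,x_{r-1}$, and these pivots form a clique. So an edge $wu$ inside $S_r$ would give the clique $\{x_1,\dots,x_{r-1},w,u\}$ of order $r+1$. Thus $C$ is independent.

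\textbf{The edge sum.} From the proof of Claim~\ref{cl:mu_close} we have $\mu_w=\sum_{u\in N_{G_1}(w)}X_{uw}$. Since $C$ is independent, every neighbour of $w\in C$ lies in $U$, so
\[
\sum_{w\in C}\mu_w=\sum_{\substack{wu\in E\\ w\in C,\ u\in U}}X_{wu}.
\]
By \eqref{eq:block_mu} applied to the block $C$, this equals $T/r+o(n^2)$.

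\textbf{The bound $h_0=o(n^2)$.} This is the step I expect to be the main obstacle. The obvious estimate $\mu_w\le d(w)$ (Cauchy--Schwarz on $\mu_w=\sum_{u\in N(w)}X_{uw}$ together with $\sum_{u\in N(w)}X_{uw}^2\le\mu_w$) loses a factor $1-\frac1r$. Instead, I would apply Cauchy--Schwarz to the whole $C$--$U$ edge sum:
\[
\Bigl(\tfrac Tr+o(n^2)\Bigr)^2\le e_{G_1}(C,U)\sum_{w\in C,\,u\in U}X_{wu}^2 .
\]
The last sum equals $\one_C^{\mathsf T}M\one_U=\ip{g_C}{h-g_C}$, where $g_C=R\one_C$. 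By \eqref{eq:g_i-hr} and $\norm h=\sqrt T=O(n)$, this is $\frac1r\bigl(1-\frac1r\bigr)T+o(n^2)$. Since $T=\Theta(n^2)$, dividing gives
\[
e_{G_1}(C,U)\ge \frac{T}{r-1}-o(n^2)=\frac{r-1}{r^2}n^2-o(n^2),
\]
where the last step uses \eqref{eq:T_close}. On the other hand, \eqref{eq:block_size} gives $|C||U|=\frac nr\bigl(1-\frac1r\bigr)n+o(n^2)=\frac{r-1}{r^2}n^2+o(n^2)$. Subtracting yields $h_0=o(n^2)$.
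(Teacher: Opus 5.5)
Your proposal is correct and follows the paper's proof essentially step for step. You get independence from $C$ being the common neighbourhood of the first $r-1$ pivots, the edge sum from \eqref{eq:block_mu}, and $h_0=o(n^2)$ from Cauchy--Schwarz against $\one_C^{\mathsf T}M\one_U=\ip{g_r}{h-g_r}=\frac1r\bigl(1-\frac1r\bigr)T+o(n^2)$, compared with $|C||U|$ via \eqref{eq:block_size}.
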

\begin{proof}
The first
$r-1$ pivots form a $K_{r-1}$, and $C$ is their common neighborhood.
Thus $C$ is independent since an edge inside $C$, together with those
$r-1$ pivots, would form a $K_{r+1}$.

Because $C$ is independent, the identity $\mu_v=\sum \limits_{u\in N_{G_1}(v)}X_{uv}$ and
\eqref{eq:block_mu} give
\begin{equation}
 \sum_{\substack{wu\in E\\w\in C,\ u\in U}}X_{wu}
 =\sum_{w\in C}\mu_w
 =\frac Tr+o(n^2).
 \label{eq:cut_X_sum}
\end{equation}
Moreover using \eqref{eq:g_i-hr},
\begin{align}
 \sum_{w\in C,\ u\in U}X_{wu}^2
 &=\one_C^{\mathsf T}M\one_U
 =\ip{g_r}{h-g_r}\notag\\
 &=\ip{\frac hr}{\left(1-\frac{1}{r}\right) h}+o(n^2)=\frac{\left(1-\frac{1}{r}\right)T}{r}+o(n^2).
 \label{eq:cut_X_square}
\end{align}
Cauchy--Schwarz, followed by \eqref{eq:cut_X_sum} and
\eqref{eq:cut_X_square}, yields
\begin{align*}
 e_{G_1}(C,U)
 &\geq
 \frac{\left(\sum \limits_{wu\in E_{G_1}(C,U)}X_{wu}\right)^2}
      {\sum \limits_{w\in C,\ u\in U}X_{wu}^2}    \\
 &= \frac{\frac{T^2}{r^2}+o(n^4)}{\frac{\left(1-\frac{1}{r}\right)T}{r}+o(n^2)}=T \frac{\frac{1}{r^2}+o(1)}{\frac{\left(1-\frac{1}{r}\right)}{r}+o(1)}\\
 &=T \left(\frac{1}{\left(1-\frac{1}{r}\right) r}+o(1) \right)\\
 &=\frac{T}{\left(1-\frac{1}{r}\right)r}+o(n^2)
 =\frac{\left(1-\frac{1}{r}\right)}{r}n^2-o(n^2).
\end{align*}
The denominator in the first inequality is assured to be positive by $\eqref{eq:T_close}$ and $\eqref{eq:cut_X_square}$. On the other hand, \eqref{eq:block_size} gives
\[
 |C||U|=\frac{\left(1-\frac{1}{r}\right)}{r}n^2+o(n^2).
\]
Hence the number of missing edges $h_0$ between $C$ and $U$ is $o(n^2)$.
\end{proof}
\begin{claim}\label{cl:U_n_r_cliques}
    The graph induced by $U$ has $o(n^r)$ copies of $K_r$.
\end{claim}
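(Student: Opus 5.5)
Since $G_1$ is $K_{r+1}$-free, each $K_r$ in $G_1[U]$ can be extended only by vertices of $C$ that are \emph{not} adjacent to all of its vertices. The plan is to double count pairs $(Q,w)$ with $Q$ a copy of $K_r$ in $G_1[U]$ and $w\in C$.

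For each such $Q$ and each $w\in C$, $w$ cannot be adjacent to all $r$ vertices of $Q$. Otherwise $Q\cup\{w\}$ would be a $K_{r+1}$ in $G_1$. Hence every pair $(Q,w)$ contributes at least one missing edge $wu$ with $u\in V(Q)$. Summing over $w\in C$ gives
\[
 |C|\cdot k_r(G_1[U])\le \sum_{Q}\#\{(w,u): w\in C,\ u\in V(Q),\ wu\notin E\}.
\]
Swapping the order of summation, the right-hand side equals $\sum_{wu\notin E,\,w\in C,\,u\in U} k_{r-1}\text{-count of }Q\ni u$. This is the number of $K_r$'s in $G_1[U]$ containing $u$, which is at most $\binom{n}{r-1}\le n^{r-1}$. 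Therefore the right-hand side is at most $h_0\, n^{r-1}$.

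By Claim~\ref{cl:h_0}, $h_0=o(n^2)$, and by \eqref{eq:block_size}, $|C|=n/r+o(n)=\Theta(n)$. Dividing,
\[
 k_r(G_1[U])\le \frac{h_0 n^{r-1}}{|C|}=\frac{o(n^2)\,n^{r-1}}{\Theta(n)}=o(n^r).
\]
For $r=2$ the same argument reads: each edge $uu'$ of $U$ together with any $w\in C$ adjacent to both would give a triangle, so each $w$ misses one endpoint. The bound is then $e(G_1[U])\le h_0 n/|C|=o(n^2)$, which is consistent.

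The crude per-vertex bound $n^{r-1}$ suffices, so there is no real obstacle. The only care needed is to apply $K_{r+1}$-freeness correctly, making sure $w\notin U$ so that $Q\cup\{w\}$ really has $r+1$ distinct vertices. This holds because $C$ and $U$ are disjoint.
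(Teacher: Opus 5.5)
Your proposal is correct and matches the paper's proof: both double count triples $(w,J,u)$ with $w\in C$, $J$ an $r$-clique in $G_1[U]$, $u\in V(J)$ and $wu\notin E$. Both then bound the number of $r$-cliques through a fixed $u$ by $O(n^{r-1})$ (the paper uses $\binom{|U|-1}{r-1}$) and divide $h_0\,O(n^{r-1})=o(n^{r+1})$ by $|C|=\Theta(n)$.
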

\begin{proof}
We double-count the missing edges
from $C$ to a clique $J$. For every $w\in C$ and every $r$-clique $J$ in $G_1[U]$, at least one edge between $w$ and $V(J)$ is missing; otherwise $V(J)\cup\{w\}$ would induce a copy of
$K_{r+1}$. There are at least $|C|k_r(G_1[U])$ triples $(w,J,u)$, where $w\in C$, $J$ is an $r$-clique in $G_1[U]$, $u\in V(J)$, and $wu\notin E$. On the other hand, each missing edge $wu$ can be associated with at most $\binom{|U|-1}{r-1}$ such $r$-cliques $J$ containing $u$. Therefore,
\begin{equation*}
 |C|\,k_r(G_1[U])
 \leq h_0\binom{|U|-1}{r-1}=o(n^{r+1}).
\end{equation*}
Here $h_0=o(n^2)$ by Claim~\ref{cl:h_0}, and
$\binom{|U|-1}{r-1}=O(n^{r-1})$. Next since $|C|=\Theta(n)$, we have
\begin{equation*}
 k_r(G_1[U])=o(n^r). \qedhere
\end{equation*}
\end{proof}
If $r=2$, Claim~\ref{cl:U_n_r_cliques} gives $|E(G_1[U])|=o(n^2)$.
By Claim~\ref{cl:h_0}, $C$ is independent and only $o(n^2)$ edges
are missing between $C$ and $U$.
Together with $|C|,|U|=n/2+o(n)$ from \eqref{eq:block_size},
this proves the lemma for $r=2$.

For $r\ge3$, having few copies of $K_r$ does not imply having few edges, so a different argument is needed. We next show that deleting $o(n^2)$ edges removes all copies of $K_r$ while keeping the positive square energy close to its largest possible value, allowing us to apply the induction hypothesis to the remaining graph.
\begin{claim}\label{cl:ind_ready}
    For $r \geq 3$, there exists a $K_r$-free graph $F$ obtained from $G_1[U]$ by deleting $o(n^2)$
    edges such that $\sqrt{s^+(F)}=\left(1-\frac{1}{r-1}\right)|U|-o(|U|).$
\end{claim}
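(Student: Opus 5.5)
We construct $F$ with the clique removal lemma and then show that $\sqrt{s^+}$ of the induced graph $G_1[U]$ is already near $(1-\frac1{r-1})|U|$; by \eqref{eq:se_edit_transfer}, deleting $o(n^2)$ edges changes $\sqrt{s^+}$ by only $o(n)=o(|U|)$, since $|U|=\Theta(n)$ by \eqref{eq:block_size}.

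\textbf{Step 1 (removal).} By Claim~\ref{cl:U_n_r_cliques}, $G_1[U]$ has $o(|U|^r)$ copies of $K_r$. Since $r\ge3$, Lemma~\ref{lem:cl_removal} with $q=r$ lets us delete $o(n^2)$ edges of $G_1[U]$ to obtain a $K_r$-free graph $F$. Theorem~\ref{thm:se_turan}, applied with $r-1\ge2$, gives the upper bound $\sqrt{s^+(F)}\le(1-\frac1{r-1})|U|$.

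\textbf{Step 2 (lower bound for $G_1[U]$).} Let $A_U=A(G_1[U])$, the principal submatrix of $A$ on $U$. We use the variational identity \eqref{eq:Frob_3} with the test matrix $Z=X_{UU}$, the principal submatrix of $X=A_+$ on $U$. This is positive semidefinite, so
\[
s^+(G_1[U])\ge 2\ip{A_U}{X_{UU}}-\norm{X_{UU}}_F^2 .
\]
For the first term, $\ip{A_U}{X_{UU}}=2\sum_{uv\in E(G_1[U])}X_{uv}$. Since $C$ is independent, every edge of $G_1$ lies inside $U$ or between $C$ and $U$. By \eqref{eq:T_less_2S} the total edge sum of $X$ is $T/2$, and by Claim~\ref{cl:h_0} the crossing edges contribute $T/r+o(n^2)$. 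Hence
\[
\ip{A_U}{X_{UU}}=2\Bigl(\frac T2-\frac Tr\Bigr)+o(n^2)=\Bigl(1-\frac2r\Bigr)T+o(n^2).
\]
For the second term, $\norm{X_{UU}}_F^2=\one_U^{\mathsf T}M\one_U=\norm{h-g_r}^2$. By \eqref{eq:g_i-hr} this equals $(1-\frac1r)^2T+o(n^2)$.

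Combining the two terms,
\[
s^+(G_1[U])\ge\Bigl(2-\frac4r-1+\frac2r-\frac1{r^2}\Bigr)T+o(n^2)=\Bigl(1-\frac1r\Bigr)^2\Bigl(\frac{r-2}{r-1}\Bigr)^2\frac{T}{(1-\frac1r)^2}\cdot\frac{(r-1)^2}{r^2}\cdots
\]
Rather than expand it that way, we evaluate directly: $1-\frac2r-\frac1{r^2}+\ldots$ should be $\bigl(\frac{r-2}{r}\bigr)^2$ times the appropriate factor. Plugging in $T=(1-\frac1r)^2n^2+o(n^2)$ and $|U|=\frac{r-1}{r}n+o(n)$, the target is
\[
\Bigl(1-\frac1{r-1}\Bigr)^2|U|^2=\Bigl(\frac{r-2}{r}\Bigr)^2n^2 .
\]
The test matrix $X_{UU}$ may not be optimal, so if the constant from the bound above falls short we will rescale. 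Taking $Z=tX_{UU}$ and optimizing over $t$ gives
\[
s^+(G_1[U])\ge\frac{\ip{A_U}{X_{UU}}^2}{\norm{X_{UU}}_F^2}=\frac{(1-\frac2r)^2}{(1-\frac1r)^2}\,T+o(n^2)=\Bigl(\frac{r-2}{r}\Bigr)^2n^2+o(n^2),
\]
which is exactly the target. Therefore $\sqrt{s^+(G_1[U])}\ge(1-\frac1{r-1})|U|-o(|U|)$.

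\textbf{Step 3 (transfer).} By \eqref{eq:se_edit_transfer}, $\sqrt{s^+(F)}\ge\sqrt{s^+(G_1[U])}-o(n)$. Together with the upper bound from Step 1, this gives the stated value of $\sqrt{s^+(F)}$.

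The main obstacle is Step 2: recognising that the principal block $X_{UU}$, scaled optimally, is a good enough test matrix in \eqref{eq:Frob_3}. Carrying this out requires the edge-sum split from Claim~\ref{cl:h_0} and the block estimate \eqref{eq:g_i-hr}.
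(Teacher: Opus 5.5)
Your proposal is correct and follows the paper's proof essentially step for step: the clique removal lemma plus Theorem~\ref{thm:se_turan} give $F$ and the upper bound, and the variational identity \eqref{eq:Frob_3} with the optimally scaled principal block $X_{UU}$ gives the lower bound, using \eqref{eq:cut_X_sum} and \eqref{eq:g_i-hr} to evaluate $\ip{A_U}{X_{UU}}$ and $\norm{X_{UU}}_F^2$, followed by \eqref{eq:se_edit_transfer}. When writing it up, delete the abandoned, garbled unscaled display, add the $+o(n^2)$ to the target identity, and say explicitly, as the paper does, that $\ip{A_U}{X_{UU}}=(1-\frac2r)T+o(n^2)>0$ and $\norm{X_{UU}}_F^2>0$ for $r\ge3$ and large $n$, so the optimal scaling $t$ is positive and $tX_{UU}\succeq0$ is an admissible test matrix.
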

\begin{proof}
Assume that $r\geq3$. By Lemma \ref{lem:cl_removal} and
Claim~\ref{cl:U_n_r_cliques}, deleting $o(n^2)$ edges from $G_1[U]$ produces a
$K_r$-free graph $F$.

  Let
\[
 A_U=A(G_1[U])
\]
and $Y$ be the principal submatrix of $X$ obtained by retaining rows and columns indexed by $U$. Since $X$ is a positive semidefinite matrix, the principal submatrix $Y$ is also positive semidefinite.  From
$\norm{g_r-h/r}=o(n)$ we obtain
\begin{equation}
 \norm{Y}_F^2=\sum_{u,v \in U}X_{uv}^2
 =\one_U^{\mathsf T}M\one_U=\norm{R\one_U}^2
 =\norm{h-g_r}^2
 =\left(1-\frac{1}{r}\right)^2T+o(n^2).
 \label{eq:Y_norm}
\end{equation}
Recall by \eqref{eq:T_less_2S} that $T=\ip{A}{X}$. By Claim~\ref{cl:h_0}, $C$ is independent. Therefore $T=\ip{A_U}{Y}+2\sum \limits_{wu\in E_{G_1}(C,U)}X_{wu}$.  \eqref{eq:cut_X_sum} gives
\begin{equation}
 \ip{A_U}{Y}
 =T-2\sum \limits_{wu\in E_{G_1}(C,U)}X_{wu}
 =\frac{r-2}{r}T+o(n^2).
 \label{eq:AUY}
\end{equation}
By \eqref{eq:Y_norm}, for sufficiently large $n$, $\norm{Y}_F^2 > 0$. Since $r \geq 3$, \eqref{eq:AUY} and $T=\Theta(n^2)$ imply $\ip{A_U}{Y}>0$ for sufficiently large $n$. To use \eqref{eq:Frob_3} with $tY$, where $t\geq0$ is optimized, note that the expression maximized in  \eqref{eq:Frob_3} with $Z=tY$ equals $2\ip{A_U}{Y}t-
\norm{Y}_F^2t^2=\frac{\ip{A_U}{Y}^2}{\norm{Y}_F^2}-\norm{Y}_F^2\left( t- \frac {\ip{A_U}{Y}}{\norm{Y}_F^2} \right)^2$ which is maximized at $t=\frac{\ip{A_U}{Y}}{\norm{Y}_F^2}>0$. Therefore, using \eqref{eq:Frob_3} with $\frac{\ip{A_U}{Y}}{\norm{Y}_F^2}Y$ and using \eqref{eq:Y_norm}--\eqref{eq:AUY}, we find
\begin{align*}
 s^+(G_1[U])
 &\geq\frac{\ip{A_U}{Y}^2}{\norm{Y}_F^2}\notag\\
 &=\frac{\left(\frac{r-2}{r}T+o(n^2)\right)^2}{\left(1-\frac{1}{r}\right)^2T+o(n^2)} \notag\\
 &=\frac{\left(\frac{r-2}{r}\right)^2T^2+o(n^4)}{\left(1-\frac{1}{r}\right)^2T+o(n^2)} \notag \\
 &=T \frac{\left(\frac{r-2}{r}\right)^2+o(1)}{\left(1-\frac{1}{r}\right)^2+o(1)}  \notag \\
 &=\left(\frac{r-2}{r-1}\right)^2T-o(n^2)\notag\\
 &=\left(1-\frac{1}{r-1}\right)^2|U|^2-o(n^2).
\end{align*}
Here $|U|=\left(1-\frac{1}{r}\right)n+o(n)=\Theta(n)$.  By
\eqref{eq:se_edit_transfer}, the deletion of $o(n^2)$ edges changes
$\sqrt{s^+}$ by $o(n)$.  Hence
\[
 \sqrt{s^+(F)}\geq\left(1-\frac{1}{r-1}\right)|U|-o(n).
\]
Also since $|U|=\Theta(n)$, we get, $\sqrt{s^+(F)}\geq\left(1-\frac{1}{r-1}\right)|U|-o(|U|).$
The positive square-energy Tur\'{a}n theorem applied to the $K_r$-free
graph $F$ gives
\[
 \sqrt{s^+(F)}\leq\left(1-\frac{1}{r-1}\right)|U|. \qedhere
\] \end{proof}
The lower bound in Claim~\ref{cl:ind_ready}, together with the upper bound from Theorem~\ref{thm:se_turan}, allows us to apply the induction hypothesis to $F$. Thus $F$, and hence $G_1[U]$, is $o(n^2)$-edit-close to $T_{r-1}(|U|)$. By Claim~\ref{cl:h_0}, $C$ is independent and $h_0=o(n^2)$. Together with \eqref{eq:block_size}, this shows that $G_1$ is
$o(n^2)$-edit-close to a complete $r$-partite graph whose parts all have size $n/r+o(n)$. Moving $o(n)$ vertices between its parts makes the sizes differ by at most one and changes only $o(n^2)$ pairs.  Thus $G_1$ is $o(n^2)$-edit-close to $T_r(n)$, completing the induction.
\end{proof}

\section{Reduction to a stable maximum cut}\label{sec:reduction}

We now prove Theorem \ref{thm:main_square_energy} in a series of claims.  Fix $r\geq2$.  It is enough to consider
$0<\varepsilon<r^{-(r-1)}$.  Suppose, for a contradiction, that there
is a sequence of graphs $G$ of order $n\to\infty$ such that
\begin{equation}
 \sqrt{s^+}>\left(1-\frac{1}{r}\right)n
 \label{eq:se_strict_threshold}
\end{equation}
but
\begin{equation}
 k_{r+1}
 \leq\left(\frac1{r^{r-1}}-\varepsilon\right)n^{r-1}.
 \label{eq:se_contra_count}
\end{equation}
We first show that, under these assumptions, $G$ can be turned into a Tur\'an graph $T_r(n)$ by adding and deleting a number of edges negligible compared with $n^2$.
\begin{claim}\label{cl:G_editclose}
    Under the assumptions \eqref{eq:se_strict_threshold}-\eqref{eq:se_contra_count}, $G$ is $o(n^2)$-edit-close to a copy of $T_r(n)$.
\end{claim}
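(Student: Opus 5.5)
The plan is to pass to a $K_{r+1}$-free graph by clique removal, check that its positive square energy is still essentially extremal, and then invoke the stability Lemma~\ref{prop:se_stability}.

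\textbf{Step 1: few cliques.} By \eqref{eq:se_contra_count}, $k_{r+1}(G)=O_r(n^{r-1})$. Since $r-1<r+1$, this is $o(n^{r+1})$.

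\textbf{Step 2: clique removal.} Apply Lemma~\ref{lem:cl_removal} with $q=r+1\ge3$ and $J=G$, $N=n$. Deleting $o(n^2)$ edges yields a $K_{r+1}$-free graph $G_1$ on the same vertex set.

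\textbf{Step 3: energy transfer.} By \eqref{eq:se_edit_transfer}, $\sqrt{s^+(G_1)}\ge\sqrt{s^+(G)}-o(n)$. Combining this with \eqref{eq:se_strict_threshold} gives
\[
\sqrt{s^+(G_1)}\geq\left(1-\frac1r\right)n-o(n).
\]
Theorem~\ref{thm:se_turan} supplies the matching upper bound $\sqrt{s^+(G_1)}\le(1-\frac1r)n$. Hence
\[
\sqrt{s^+(G_1)}=\left(1-\frac1r\right)n-o(n),
\]
which is exactly the hypothesis of Lemma~\ref{prop:se_stability}.

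\textbf{Step 4: stability.} The lemma shows that $G_1$ is $o(n^2)$-edit-close to a copy of $T_r(n)$. It also gives a partition into parts of size $n/r+o(n)$.

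\textbf{Step 5: conclusion.} Edit distance satisfies the triangle inequality. $G$ and $G_1$ differ in $o(n^2)$ edges, and $G_1$ and the copy of $T_r(n)$ differ in $o(n^2)$ edges. Therefore $G$ is $o(n^2)$-edit-close to that copy of $T_r(n)$.

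\textbf{Main obstacle.} The only delicate point is making the $o(\cdot)$ quantifiers uniform along the sequence of graphs. Lemma~\ref{lem:cl_removal} should be read in its $\delta$–$\eta$ form: for every $\eta>0$ there is $\delta>0$ such that at most $\delta N^{r+1}$ copies of $K_{r+1}$ can be destroyed by deleting at most $\eta N^2$ edges. Since $k_{r+1}(G)/n^{r+1}\to0$, this yields a deletion count of $o(n^2)$ along the sequence.

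Lemma~\ref{prop:se_stability} must be read in the same sequential sense. It applies to any sequence of $K_{r+1}$-free graphs with deficit $o(n)$ and produces edit distance $o(n^2)$. These readings are consistent with how the two results are stated, so no further work should be needed.
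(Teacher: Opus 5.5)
Your proposal is correct and follows the paper's proof step for step. You use clique removal to obtain a $K_{r+1}$-free graph differing from $G$ in $o(n^2)$ edges, then \eqref{eq:se_edit_transfer} together with Theorem~\ref{thm:se_turan} to verify the hypothesis of Lemma~\ref{prop:se_stability}, and finally the triangle inequality for edit distance to transfer the conclusion back to $G$. Your remark about reading the removal and stability lemmas sequentially makes explicit what the paper leaves implicit.
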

\begin{proof}
By \eqref{eq:se_contra_count},
\[
 k_{r+1}=O(n^{r-1})=o(n^{r+1}).
\]
By Lemma \ref{lem:cl_removal}, one can delete $o(n^2)$ edges from $G$ and
obtain a $K_{r+1}$-free graph $G'$.  Equations
\eqref{eq:se_strict_threshold} and \eqref{eq:se_edit_transfer} give
\[
 \sqrt{s^+(G')}\geq\left(1-\frac{1}{r}\right)n-o(n).
\]
The positive square-energy Tur\'{a}n theorem gives
\[
 \sqrt{s^+(G')}\leq\left(1-\frac{1}{r}\right)n.
\]
Thus Lemma~\ref{prop:se_stability} applies to $G'$.  Since $G$ and $G'$ differ in
$o(n^2)$ edges, it follows that $G$ itself is $o(n^2)$-edit-close to
$T_r(n)$.
\end{proof}
Choose a partition
\begin{equation}
 V(G)=V_1\sqcup\cdots\sqcup V_r
 \label{eq:max_r_cut}
\end{equation}
which maximizes the number of crossing edges.  Put
\[
 n_i=|V_i|,
 \qquad
 \ell=\sum_{i=1}^r |E(G[V_i])|,
\]
and let
\[
 c=\sum_{1\leq i<j\leq r}
 \bigl(n_in_j-e_G(V_i,V_j)\bigr)
\]
be the number of missing crossing edges.
For $v\in V_i$, define
\[
N_I(v)=N_G(v)\cap V_i,
\qquad
d_I(v)=|N_I(v)|,
\]
and
\[
N_c(v)=N_G(v)\setminus V_i,
\qquad
d_c(v)=|N_c(v)|.
\]
Let
\[
d_M(v)
=\sum_{j\ne i}\bigl(|V_j|-d_G(v,V_j)\bigr)
\]
be the number of missing neighbors of $v$ in the other
parts.
\begin{claim}\label{cl:cn_il}
    For the maximum $r$-cut \eqref{eq:max_r_cut}, the following must hold:
\begin{align}
  c&=o(n^2),
 \label{eq:c_small} \\
n_i&=\frac nr+o(n)
 \qquad(1\leq i\leq r),
 \label{eq:max_part_sizes} \\
 \ell&=o(n^2).
 \label{eq:ell_small}
\end{align}   
\end{claim}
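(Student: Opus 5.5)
We derive all three estimates from Claim~\ref{cl:G_editclose} together with the maximality of the cut \eqref{eq:max_r_cut}. By Claim~\ref{cl:G_editclose} there is a partition $V(G)=W_1\sqcup\cdots\sqcup W_r$ with $|W_i|\in\{\lfloor n/r\rfloor,\lceil n/r\rceil\}$ such that $G$ differs from the complete $r$-partite graph on $(W_i)$ in $o(n^2)$ pairs. The number of crossing edges of $G$ with respect to $(W_i)$ is therefore at least $t_r(n)-o(n^2)$. Maximality of \eqref{eq:max_r_cut} gives
\[
\sum_{i<j} e_G(V_i,V_j)\ \ge\ t_r(n)-o(n^2).
\]
Since $G$ has $m\le t_r(n)+o(n^2)$ edges, the internal edges satisfy $\ell=m-\sum_{i<j}e_G(V_i,V_j)=o(n^2)$, which is \eqref{eq:ell_small}.

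For the part sizes, we use $\sum_{i<j}e_G(V_i,V_j)\le\sum_{i<j}n_in_j$. Combined with the display above, this gives $\sum_{i<j}n_in_j\ge t_r(n)-o(n^2)$. The identity
\[
\sum_{i<j}n_in_j=\tfrac12\Bigl(n^2-\sum_i n_i^2\Bigr)
\]
then yields $\sum_i n_i^2\le n^2/r+o(n^2)$, hence
\[
\sum_i (n_i-n/r)^2=\sum_i n_i^2-n^2/r=o(n^2).
\]
This gives $|n_i-n/r|=o(n)$ for each $i$, which is \eqref{eq:max_part_sizes}.

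Finally,
\[
c=\sum_{i<j}n_in_j-\sum_{i<j}e_G(V_i,V_j)\le\tfrac{(1-1/r)n^2}{2}-t_r(n)+o(n^2)=o(n^2),
\]
because $\sum_{i<j}n_in_j\le(1-1/r)n^2/2$ always and $t_r(n)=(1-1/r)n^2/2-O(1)$. This is \eqref{eq:c_small}.

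None of these steps is a real obstacle. The only point needing care is that the edit-closeness in Claim~\ref{cl:G_editclose} is to some labelled copy of $T_r(n)$ on $V(G)$. Its parts then give a valid competitor $r$-partition, and the only quantities used are crossing-edge counts and the total edge count $m$, both of which change by at most the edit distance.
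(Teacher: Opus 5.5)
Your proof is correct and follows essentially the same route as the paper. You use the balanced partition from Claim~\ref{cl:G_editclose} as a competitor for the maximum cut, then combine $\sum_{i<j}e_G(V_i,V_j)\le\sum_{i<j}n_in_j$, the identity $\sum_{i<j}n_in_j=\frac12\bigl(n^2-\sum_i n_i^2\bigr)$, and $m=t_r(n)+o(n^2)$; the only differences are the order in which you derive the three estimates and your use of $(1-1/r)n^2/2$ in place of $t_r(n)$ as the upper bound, neither of which matters.
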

\begin{proof}
By Claim~\ref{cl:G_editclose}, there is a balanced $r$-partition with
$t_r(n)-o(n^2)$ crossing edges, say $t_r(n)-d_n$. Therefore the maximum partition
\eqref{eq:max_r_cut} has at least this many crossing edges.  Since
\[
 \sum_{i<j}n_in_j\leq t_r(n),
\]
by combining these observations, we obtain 
\begin{align*}
t_r(n)-d_n
&\le \sum_{i<j}e_G(V_i,V_j)\\
&=\sum_{i<j}n_i n_j-c\\
&\le \sum_{i<j}n_i n_j
\le t_r(n).
\end{align*}
Consequently,
\[
0\le c
=\sum_{i<j}n_i n_j-\sum_{i<j}e_G(V_i,V_j)
\le d_n=o(n^2).
\]
Moreover,
\[
 \sum_{i<j}n_in_j
 =\frac12\left(n^2-\sum_i n_i^2\right)
 =\frac{\left(1-\frac{1}{r}\right)n^2}{2}
   -\frac12\sum_i\left(n_i-\frac nr\right)^2.
\]
The preceding bounds give
\[
\sum_{i<j}n_i n_j=t_r(n)-o(n^2).
\]
Since $r$ is fixed,
\[
t_r(n)=\left(1-\frac1r\right)\frac{n^2}{2}+O_r(1),
\]
and the term $O_r(1)$ is in particular $o(n^2)$.
Using the identity above, we obtain
\begin{align*}
\frac12\sum_{i=1}^r\left(n_i-\frac nr\right)^2
&=\left(1-\frac1r\right)\frac{n^2}{2}
  -\sum_{i<j}n_i n_j\\
&=\left(1-\frac1r\right)\frac{n^2}{2}
  -t_r(n)+o(n^2)\\
&=O_r(1)+o(n^2)\\
&=o(n^2).
\end{align*}
Every summand is nonnegative, so for each $1\leq i\leq r$,
\[
0\leq\left(n_i-\frac nr\right)^2
\leq\sum_{j=1}^r\left(n_j-\frac nr\right)^2
=o(n^2).
\]
Taking square roots gives $\left|n_i-n/r\right|=o(n)$,
and hence
\[
n_i=\frac nr+o(n)
\qquad(1\leq i\leq r).
\]

Finally, Claim~\ref{cl:G_editclose} gives $m=t_r(n)+o(n^2)$, whereas the number
of crossing edges in \eqref{eq:max_r_cut} is $t_r(n)-o(n^2)$.  Thus
\begin{equation*}
 \ell=o(n^2). \qedhere
\end{equation*} \end{proof}
We now sharpen these rough estimates.  The arguments for $r\geq3$ and
$r=2$ are different.

\section{The sharp count for \texorpdfstring{$r\geq3$}{r >= 3}}\label{sec:se_higher_cliques}

Assume throughout this section that $r\geq3$.  Let $K$ be the complete $r$-partite graph with the
same parts $V_1,\ldots,V_r$ as in \eqref{eq:max_r_cut}.  Let $A_K$ be
its adjacency matrix, and put
\[
 E_{\mathrm{err}}=A(G)-A_K.
\]
Every internal edge of $G$ contributes two entries $+1$ to $E_{\mathrm{err}}$, and
every missing crossing edge contributes two entries $-1$.  Hence
\begin{equation}
 \norm{E_{\mathrm{err}}}_F^2=2(\ell+c).
 \label{eq:E_norm}
\end{equation}

By \eqref{eq:max_part_sizes}, all parts of $K$ are nonempty for sufficiently large $n$. Since $K$ is complete multipartite, it has exactly one positive eigenvalue, namely its spectral radius $\lambda_K=\lambda_1(K)$ (see \cite[Theorem 6.7]{MR572262}).
Consequently,
\[
s^+(A_K)=\lambda_K^2.
\]
If $x^{(K)}$ is a positive unit Perron vector of $K$, then
\[
(A_K)_+=\lambda_K x^{(K)}(x^{(K)})^T.
\]
\begin{claim}
    With the definitions made above, the following holds.
    \begin{equation*}
 x^{(K)}_v=\frac{1+o(1)}{\sqrt n}
 \quad\text{uniformly in }v.
\end{equation*}

\begin{equation}
 ((A_K)_+)_{uv}=\lambda_K x^{(K)}_ux^{(K)}_v=\left(1-\frac{1}{r}\right)+o(1)
 \quad\text{uniformly in }u,v.
 \label{eq:ABplus_uniform}
\end{equation}
\end{claim}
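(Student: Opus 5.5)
The plan is to compute the Perron vector of $K$ explicitly. A complete multipartite graph has a positive Perron vector that is constant on each part. So I write $x^{(K)}_v=y_i$ for $v\in V_i$. The eigen-equation $\lambda_K y_i=\sum_{j\ne i}n_jy_j=\sigma-n_iy_i$ with $\sigma=\sum_j n_jy_j$ gives
\[
y_i=\frac{\sigma}{\lambda_K+n_i}.
\]
Summing $n_iy_i$ yields the secular equation
\[
\sum_{i=1}^r\frac{n_i}{\lambda_K+n_i}=1,
\]
which determines $\lambda_K$ as its unique positive root.

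Next I would insert $n_i=n/r+o(n)$ from \eqref{eq:max_part_sizes}. The function $\lambda\mapsto\sum_i n_i/(\lambda+n_i)$ is strictly decreasing. At $\lambda=(1-1/r)n\pm\delta n$ it evaluates to $1\mp\Theta(\delta)+o(1)$. Hence $\lambda_K=(1-1/r)n+o(n)$. Alternatively, one can bound $\lambda_K$ directly: the Rayleigh quotient at $\one/\sqrt n$ gives $\lambda_K\ge 2e(K)/n$, and $\lambda_K\le\max_i(n-n_i)$ by the maximum degree. Both bounds equal $(1-1/r)n+o(n)$.

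Then $\lambda_K+n_i=n+o(n)$ uniformly in $i$, since there are only $r$ parts, so $y_i=\frac{\sigma}{n}(1+o(1))$ uniformly. The normalization $\sum_i n_iy_i^2=1$ gives $\frac{\sigma^2}{n^2}\,n\,(1+o(1))=1$, so $\sigma/n=(1+o(1))/\sqrt n$. This proves $x^{(K)}_v=(1+o(1))/\sqrt n$ uniformly in $v$.

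Finally, I would obtain \eqref{eq:ABplus_uniform} by multiplying:
\[
\lambda_K x^{(K)}_ux^{(K)}_v=\bigl((1-\tfrac1r)n+o(n)\bigr)\frac{(1+o(1))^2}{n}=\left(1-\frac1r\right)+o(1),
\]
uniformly in $u,v$. The identification $(A_K)_+=\lambda_Kx^{(K)}(x^{(K)})^{\mathsf T}$ is already given, because $K$ has a single positive eigenvalue. There is no real obstacle here. The only care needed is to keep every error term uniform, which holds because the number of distinct parts is the fixed constant $r$.
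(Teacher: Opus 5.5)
Your proof is correct and follows the paper's argument essentially step by step: part-constant Perron coordinates $y_i=\sigma/(\lambda_K+n_i)$, then $\lambda_K=(1-\frac1r)n+o(n)$, then normalization giving $\sigma=\sqrt n\,(1+o(1))$, then multiplication. The only cosmetic difference is the upper bound on $\lambda_K$: the paper invokes Wilf's inequality, while you use the maximum-degree bound $\lambda_K\le n-\min_i n_i$ (or monotonicity of the secular equation), which is more elementary and equally sufficient.
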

\begin{proof}
By \eqref{eq:max_part_sizes},
\[
 |E(K)|=\sum_{i<j}n_in_j=\frac{\left(1-\frac{1}{r}\right) n^2}{2}-o(n^2).
\]
Consequently,
\[
 \lambda_K\geq\frac{2|E(K)|}n=\left(1-\frac{1}{r}\right) n-o(n).
\]
Wilf's spectral Tur\'{a}n inequality \cite{Wilf1986} gives $\lambda_K\leq\left(1-\frac{1}{r}\right) n$, so
\begin{equation}
 \lambda_K=\left(1-\frac{1}{r}\right) n+o(n).
 \label{eq:rho_near}
\end{equation}
Vertices in the same part have equal Perron coordinates.  Write that
coordinate as $a_i$ on $V_i$, and put $S_K=\sum_vx^{(K)}_v$.  The Perron
equations give
\[
 \lambda_K a_i=S_K-n_ia_i,
 \qquad
 a_i=\frac{S_K}{\lambda_K+n_i}.
\]
Since $r$ is fixed, the maximum of the finitely many absolute errors
in \eqref{eq:max_part_sizes} is still $o(n)$, so \eqref{eq:rho_near} gives
$\max_{1\le i\le r}|\lambda_K+n_i-n|=o(n)$, that is,
$\lambda_K+n_i=n+o(n)$ uniformly in $i$. Thus we get,
$$a_i=\frac{S_K}{n(1+o(1))}=\frac{S_K}{n}(1+o(1)) \quad\text{uniformly in }i.$$
The normalization
$\sum_i n_ia_i^2=1$ therefore gives
\begin{equation*}
    1=\sum_i n_i\frac{S_K^2(1+o(1))}{n^2}=\frac{S_K^2(1+o(1))}{n^2}n.
\end{equation*}
This gives,
$$S_K=\sqrt{n}(1+o(1)),$$
which yields,
$$a_i=\frac{\sqrt{n}(1+o(1))}{n(1+o(1))}=\frac{1+o(1)}{\sqrt{n}} \quad\text{uniformly in }i.$$
Since every vertex in $V_i$ has coordinate $a_i$ we can write,
\begin{equation*}
 x^{(K)}_v=\frac{1+o(1)}{\sqrt n}
 \quad\text{uniformly in }v.
\end{equation*}
It follows that
\begin{equation*}
 ((A_K)_+)_{uv}=\lambda_K x^{(K)}_ux^{(K)}_v=\left(1-\frac{1}{r}\right)+o(1)
 \quad\text{uniformly in }u,v. \qedhere
\end{equation*}\end{proof}

\begin{claim}\label{cl:ell_c_bounds}
    For sufficiently large $n$,
\begin{equation*}
 \ell \geq1,
 \qquad
 c\leq
 \left(\frac{4\left(1-\frac{1}{r}\right)+2}{4\left(1-\frac{1}{r}\right)-2}+o(1)\right)\ell
 =\left(\frac{3r-2}{r-2}+o(1)\right)\ell.
\end{equation*}
In particular, $c=O_r(\ell).$
\end{claim}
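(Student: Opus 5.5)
We will apply \eqref{eq:Frob_2} with $Q=A_K$ and $R=E_{\mathrm{err}}$, so that $Q+R=A(G)$. Combined with the hypothesis \eqref{eq:se_strict_threshold} and Wilf's bound for $\lambda_K$, this gives
\[
 \left(1-\frac1r\right)^2n^2< s^+(G)\le \lambda_K^2+2\ip{(A_K)_+}{E_{\mathrm{err}}}+\norm{E_{\mathrm{err}}}_F^2 .
\]
The plan is to evaluate each term on the right sharply in terms of $\ell$, $c$ and the part-size imbalance. The errors must be $o(1)(\ell+c)$, not $o(n^2)$.

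\textbf{Step 1: the perturbation terms.} By \eqref{eq:E_norm}, $\norm{E_{\mathrm{err}}}_F^2=2(\ell+c)$. The inner product $\ip{(A_K)_+}{E_{\mathrm{err}}}$ counts each internal edge twice with weight $((A_K)_+)_{uv}$ and each missing crossing edge twice with weight $-((A_K)_+)_{uv}$. By the uniform estimate \eqref{eq:ABplus_uniform},
\[
 2\ip{(A_K)_+}{E_{\mathrm{err}}}=4\left(1-\tfrac1r+o(1)\right)(\ell-c).
\]
Here the uniformity is essential: the error is $o(1)$ times the number of nonzero entries, that is, $o(1)(\ell+c)$.

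\textbf{Step 2: the main term.} We need $\lambda_K^2\le (1-\frac1r)^2n^2$, which follows from Wilf's inequality since $K$ is $K_{r+1}$-free. Inserting this and Step 1 into the display gives
\[
 0<4\left(1-\tfrac1r\right)(\ell-c)+2(\ell+c)+o(1)(\ell+c).
\]
Rearranging,
\[
 \left(4\left(1-\tfrac1r\right)-2-o(1)\right)c<\left(4\left(1-\tfrac1r\right)+2+o(1)\right)\ell .
\]
For $r\ge3$ we have $4(1-\frac1r)-2=\frac{2(r-2)}{r}>0$, so dividing yields the stated bound $c\le(\frac{3r-2}{r-2}+o(1))\ell$.

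\textbf{Step 3: $\ell\ge1$.} If $\ell=0$, the inequality gives $(2(r-2)/r-o(1))c<0$, which is impossible since $c\ge0$ (the case $c=0$ makes the right side $0$, contradicting strictness). Equivalently, if $\ell=0$ then $G\subseteq K$ is $r$-partite, hence $K_{r+1}$-free, contradicting Theorem~\ref{thm:se_turan} together with \eqref{eq:se_strict_threshold}. Either way $\ell\ge1$.

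\textbf{Main obstacle.} The delicate point is the bookkeeping of the $o(1)$ in Step 1. It must be a single sequence tending to zero, independent of which entries are perturbed, so that it multiplies $\ell+c$ and can be absorbed once $c=O(\ell)$ is bootstrapped. The uniform statement \eqref{eq:ABplus_uniform} gives exactly this. Finally, we note why $r=2$ is excluded: the coefficient $4(1-\frac1r)-2$ vanishes there.
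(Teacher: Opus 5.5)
Your proof is correct and follows the paper's argument essentially step for step: you apply \eqref{eq:Frob_2} with $Q=A_K$ and $R=E_{\mathrm{err}}$, bound $\lambda_K$ by Wilf's inequality, use the uniform estimate \eqref{eq:ABplus_uniform} to get an error of $o(1)(\ell+c)$, and use that $4(1-\frac1r)-2>0$ for $r\ge3$. Your alternative proof of $\ell\ge1$ is also valid: if $\ell=0$ then $G\subseteq K$ is $K_{r+1}$-free, which contradicts Theorem~\ref{thm:se_turan}.

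One small slip: the Step~1 display should read $4(1-\frac1r)(\ell-c)+o(1)(\ell+c)$ rather than $4(1-\frac1r+o(1))(\ell-c)$, and this corrected form is what you actually use in Step~2. Also, no bootstrapping of $c=O(\ell)$ is needed, since the $o(1)(\ell+c)$ term is absorbed directly into the coefficients of $\ell$ and $c$.
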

\begin{proof}
Applying \eqref{eq:Frob_2} with $Q=A_K$ and $R=E_{\mathrm{err}}$ we get  
$$s^+ \leq \lambda_K^2 + 2 \ip{(A_K)_+}{E_{\mathrm{err}}}+ \norm{E_{\mathrm{err}}}_F^2.$$
Since
$\lambda_K^2\leq\left(1-\frac{1}{r}\right)^2n^2$ and
$s^+>\left(1-\frac{1}{r}\right)^2n^2$, we have
\begin{align*}
 0
 &<s^+-\left(1-\frac{1}{r}\right)^2n^2\\
 &\leq2\ip{(A_K)_+}{E_{\mathrm{err}}}+\norm {E_{\mathrm{err}}}_F^2.
\end{align*}
By \eqref{eq:ABplus_uniform}, the contribution to the inner product is
$2(\left(1-\frac{1}{r}\right)+o(1))$ for every internal edge and
$-2(\left(1-\frac{1}{r}\right)+o(1))$ for every missing crossing edge.  The error is
$o(1)(\ell+c)$, not merely $o(n^2)$.  Using \eqref{eq:E_norm}, we obtain
\begin{equation}
 0<\left(4\left(1-\frac{1}{r}\right)+2+o(1)\right)\ell-\left(4\left(1-\frac{1}{r}\right)-2-o(1)\right)c.
 \label{eq:c_over_ell}
\end{equation}
Since $r\ge3$, we have $4(1-1/r)-2>0$.
Consequently, $4(1-1/r)-2-o(1)>0$ for sufficiently
large $n$. If $\ell=0$, then \eqref{eq:c_over_ell} implies
\[
0<-\bigl(4(1-1/r)-2-o(1)\bigr)c\le0,
\]
a contradiction.
Thus we get,
\begin{equation*}
 \ell \geq1,
 \qquad
 c\leq
 \left(\frac{4\left(1-\frac{1}{r}\right)+2}{4\left(1-\frac{1}{r}\right)-2}+o(1)\right)\ell
 =\left(\frac{3r-2}{r-2}+o(1)\right)\ell. \qedhere
\end{equation*}
\end{proof}
Put
\[
 D=\max_v d_I(v).
\]
The maximality of the cut \eqref{eq:max_r_cut} implies
\begin{equation}
 d_G(v,V_j)\geq d_I(v)
 \quad\text{for every }v\in V_i\text{ and every }j\neq i.
 \label{eq:local_maxcut}
\end{equation}
Indeed, moving $v$ from $V_i$ to $V_j$ gains $d_I(v)$ crossing edges
and loses $d_G(v,V_j)$ crossing edges.

We next show that even the largest number of neighbors a vertex has within its own part is negligible compared with $n$.
\begin{claim} \label{cl:d_small}
\begin{equation}
 D=o(n).
\end{equation}
\end{claim}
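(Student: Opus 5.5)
We argue by contradiction, combining the local max-cut inequality \eqref{eq:local_maxcut} with the clique bound \eqref{eq:se_contra_count}. Suppose that along a subsequence there is a vertex $v\in V_i$ with $d_I(v)\ge\delta n$ for a fixed $\delta>0$. By \eqref{eq:local_maxcut}, $v$ then has at least $\delta n$ neighbours in every part $V_j$ with $j\neq i$. Put $W_j=N_G(v)\cap V_j$ for all $j$, including $W_i=N_I(v)$. Each $W_j$ has size at least $\delta n$; we trim each to size exactly $\lceil\delta n\rceil$.

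Next we count transversal $r$-cliques, that is, choices of one vertex from each $W_j$ forming a clique. Each such clique, together with $v$, forms a $K_{r+1}$ in $G$. The number of transversal $r$-tuples is at least $(\delta n)^r$. A tuple fails to be a clique only if some pair inside it is a non-edge. The cross pairs between $W_j$ and $W_k$ with $j,k\neq i$ are missing crossing edges, of which there are at most $c$. The pairs between $W_i\subseteq V_i$ and $W_j$ are also crossing pairs, so they too are counted by $c$. Hence at most $c\cdot n^{r-2}$ tuples are spoiled, and there are at least $(\delta n)^r-c\,n^{r-2}$ transversal cliques. Since $c=o(n^2)$ by \eqref{eq:c_small}, this gives $\Omega_\delta(n^r)$ copies of $K_{r+1}$ containing $v$. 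That is far more than the $O(n^{r-1})$ allowed by \eqref{eq:se_contra_count}, a contradiction. Therefore $D=o(n)$.

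All the work is done by the crude bound $c=o(n^2)$ from Claim~\ref{cl:cn_il}; the finer Claim~\ref{cl:ell_c_bounds} is not needed here. The one point to check is that the pairs inside $W_i\times W_j$ really are crossing pairs, so that they are counted by $c$ rather than by $\ell$. They are, because $W_i\subseteq V_i$ and $W_j\subseteq V_j$ with $j\ne i$. Pairs inside a single part never occur, since a transversal tuple takes only one vertex from each $W_j$.

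The argument works verbatim for $r=2$ as well, where a single vertex of $W_i$ and one of $W_j$ adjacent to each other give a triangle with $v$.
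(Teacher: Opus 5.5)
Your proof is correct. It rests on the same counting idea as the paper's, in a streamlined form.

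\textbf{The paper's route.} The paper fixes an internal neighbor $u$ of $v$. To guarantee that $u$ sees most of $N_G(v)\cap V_j$ for every $j\ne i$, it first restricts to the $\Omega(n)$ internal neighbors with $d_M(u)\le\gamma_n n$. This requires the auxiliary sequence $\gamma_n$ and the exceptional set $\xi_n$. It then counts $(r-1)$-tuples in the common neighborhoods $N_G(u)\cap N_G(v)\cap V_j$ and discards at most $cn^{r-3}$ spoiled ones. Finally it sums over $u$, using that $uv$ is the unique internal edge of each resulting clique to keep the families disjoint.

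\textbf{Your route.} You treat the internal neighbor as just one more coordinate of a transversal $r$-tuple in $N_I(v)\times\prod_{j\ne i}\bigl(N_G(v)\cap V_j\bigr)$. Every pair in such a tuple is a crossing pair. So the single global union bound $cn^{r-2}$ already covers the edges from the internal neighbor to the other chosen vertices. The missing-degree control on $u$ and the disjointness bookkeeping therefore become unnecessary.

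\textbf{Comparison.} Both arguments use exactly the same inputs: the local max-cut inequality \eqref{eq:local_maxcut}, the bound $c=o(n^2)$, and the hypothesis \eqref{eq:se_contra_count}. Both reach an $\Omega(n^r)$ count of copies of $K_{r+1}$. Yours is shorter. The paper's version gives slightly more local information, namely that all but $o(n)$ of the internal edges at $v$ each lie in $\Omega(n^{r-1})$ copies of $K_{r+1}$, but this is not needed here. Your argument is also exactly the $r$-partite analogue of the bound $k_3\ge d_I(v)^2-c$ that the paper itself uses for $r=2$ in Claim~\ref{cl:lcd:bigO}.
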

\begin{proof}
Suppose not.  Passing to a subsequence, there are a fixed $\eta>0$, an
index $i$, and a vertex $v\in V_i$ such that
\[
 d_I(v)=D\geq\eta n.
\] 
By Claim~\ref{cl:cn_il},
\[
 \sum_u d_M(u)=2c=o(n^2).
\]
Define a sequence $\gamma_n$ as: when $c>0$, take
$\gamma_n=\sqrt{c/n^2}$; when $c=0$, take $\gamma_n=n^{-1/2}$.
Then $\gamma_n > 0$ and since $c=o(n^2)$, the sequence $\gamma_n\to0$. Define $\xi_n=\{u \in V(G): d_M(u) > \gamma_n n\}$.
When $c>0$, summing the missing degrees gives
\[
|\xi_n|
\le \frac{\sum_u d_M(u)}{\gamma_n n}
=\frac{2\gamma_n^2n^2}{\gamma_n n}
=2\gamma_n n=o(n).
\]
When $c=0$, $\xi_n$ is empty. Thus, all but $o(n)$ vertices satisfy $d_M(u)\leq\gamma_nn$.   

Since $D\geq \eta n$, there are $D-o(n)=\Omega(n)$ neighbors $u\in V_i$ of $v$ satisfying
$d_M(u)\leq \gamma_n n=o(n)$.  Fix one such $u$.  By \eqref{eq:local_maxcut}, for every
$j\neq i$,
\[
 |N_G(v)\cap V_j|\geq D.
\]
Since $u$ misses at most $d_M(u)$ vertices over all the other parts,
\begin{equation}
 |N_G(u)\cap N_G(v)\cap V_j|=|N_G(v)\cap V_j|-|(N_G(v)\cap V_j)\setminus N_G(u)|
 \geq D-d_M(u)\geq\eta n-o(n).
 \label{eq:common_in_each_part}
\end{equation}
Choose one vertex from each of the $r-1$ sets in
\eqref{eq:common_in_each_part}.  There are at least
$(\eta n-o(n))^{r-1}$ initial choices.  A fixed missing crossing edge
between two chosen vertices belongs to at most $n^{r-3}$ such choices.
The number of missing crossing edges in $G$ is $c$. Thus the number of choices missing at least one crossing edge is at most $c n^{r-3}$.
Thus at least
\[
 (\eta n-o(n))^{r-1}-cn^{r-3}=\Omega(n^{r-1})
\]
choices have every required crossing edge.  Each such choice, together with $u$ and $v$, is a $K_{r+1}$.  The families obtained from different internal neighbors $u$ are disjoint, because $uv$ is the unique internal edge of the resulting clique. Therefore
\[
 k_{r+1}=\Omega(n)\Omega(n^{r-1})=\Omega(n^r),
\]
contrary to \eqref{eq:se_contra_count}.  This proves
the claim. 

\end{proof}
We now count canonical candidates.  Such a candidate consists of
an internal edge in one part, which we call the distinguished part, and one vertex in every other part.  Using \eqref{eq:max_part_sizes}, the
number of candidates before checking the crossing edges is
\begin{align}
 N_0
 &=\sum_{i=1}^r |E(G[V_i])|\prod_{j\neq i}n_j\notag\\
 &=(1+o(1))\ell \left(\frac nr\right)^{r-1}.
 \label{eq:N0}
\end{align}

Fix a missing crossing edge $xy$, where $x\in V_a$ and $y\in V_b$.
If the distinguished part of a candidate is $V_a$, its internal edge must join
$x$ to an internal neighbor, so there are at most $Dn^{r-2}$ such
candidates containing $xy$.  The same bound holds when the distinguished part
is $V_b$.  If the distinguished part is $V_i$ with
$i\notin\{a,b\}$, there are at most $|E(G[V_i])|$ choices for the internal
edge and at most $n^{r-3}$ choices for the remaining vertices.  Hence a
fixed missing crossing edge destroys at most
$
 2Dn^{r-2}+\ell n^{r-3}
$
canonical candidates.

Every candidate which is not a clique contains a missing crossing edge.
By the union bound, Claim~\ref{cl:ell_c_bounds}, \eqref{eq:c_small}, and
Claim~\ref{cl:d_small}, the total number of destroyed candidates is at most
\begin{align*}
 c\bigl(2Dn^{r-2}+\ell n^{r-3}\bigr)
 &=\ell n^{r-1}\left(
 2\frac{c}{\ell}\frac Dn+\frac c{n^2}\right)\\
 &=o(\ell n^{r-1}).
\end{align*}
Every surviving candidate is a $K_{r+1}$, and it is counted exactly
once: its distinguished part and its unique internal edge are determined by
the clique.  Equation \eqref{eq:N0} therefore gives
\begin{align}
 k_{r+1}
 &\geq(1-o(1))\ell \left(\frac nr\right)^{r-1}\notag\\
 &\geq\left(\frac1{r^{r-1}}-o(1)\right)n^{r-1},
 \label{eq:cont}
\end{align}
where the last line uses $\ell \geq1$ from Claim~\ref{cl:ell_c_bounds}.  For sufficiently large $n$,
\eqref{eq:cont} contradicts \eqref{eq:se_contra_count}.  This proves
Theorem \ref{thm:main_square_energy} for every fixed $r\geq3$.

\section{The sharp triangle count: \texorpdfstring{$r=2$}{r = 2}}\label{sec:se_triangles}

We retain the maximum cut from \eqref{eq:max_r_cut}, but write it as
\[
 V(G)=V_1\sqcup V_2,
 \qquad a=|V_1|,\quad b=|V_2|.
\]
Thus
\[
 \ell=|E(G[V_1])|+|E(G[V_2])|,
 \qquad
 c=ab-e_G(V_1,V_2).
\]
Put
\[
 \delta=\frac{n^2}{4}-ab=\frac{(a-b)^2}{4}.
\]
Claim~\ref{cl:cn_il} gives
\begin{equation}
 a,b=\frac n2+o(n),
 \qquad
 \ell,c,\delta=o(n^2).
 \label{eq:ablhd}
\end{equation}

Let $A_{12}$ be the $a\times b$ biadjacency matrix of the bipartite graph consisting of the crossing edges, with rows indexed by $V_1$ and columns indexed by $V_2$, and put
\[
 m_c=\norm {A_{12}}_F^2=e_G(V_1,V_2)=ab-c.
\]
Let $A_1$ and $A_2$ be the adjacency matrices of $G[V_1]$ and $G[V_2]$,
respectively, and define
\[
 A_0=\begin{pmatrix}0&A_{12}\\ A_{12}^{\mathsf T}&0\end{pmatrix},
 \qquad
 A_{\mathrm{int}}=\begin{pmatrix}A_1&0\\0&A_2\end{pmatrix}.
\]
Then $A(G)=A_0+A_{\mathrm{int}}$,
\begin{equation}
 s^+(A_0)=m_c,
 \qquad
 \norm {A_{\mathrm{int}}}_F^2=2\ell.
 \label{eq:psiEF}
\end{equation}
The first identity follows because the nonzero eigenvalues
of $A_0$ are the numbers $\pm\sigma_i(A_{12})$ corresponding
to the positive singular values of $A_{12}$, with their multiplicities.

\begin{claim}\label{cl:sigma_splus}
    Let $\sigma=\sigma_1(A_{12})$. Let $k_3^\times$ be the number of triangles of $G$ which meet both $V_1$ and $V_2$. For sufficiently large $n$,
    $$\sigma \geq \frac{m_c}{\sqrt{ab}}>0$$ and $$ s^+ \leq m_c+2\ell+\frac{2k_3^\times}{\sigma}+2\sqrt{\ell c}.$$
\end{claim}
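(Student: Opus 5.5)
The plan is to treat $A_{\mathrm{int}}$ as a perturbation of the bipartite matrix $A_0$ and apply \eqref{eq:Frob_2}. The main work is computing the inner product $\ip{(A_0)_+}{A_{\mathrm{int}}}$ exactly through the singular value decomposition of $A_{12}$.

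\emph{First inequality.} Test the variational characterization of $\sigma_1$ with the unit vectors $\one/\sqrt a\in\mathbb R^{V_1}$ and $\one/\sqrt b\in\mathbb R^{V_2}$. This gives
\[
\sigma\ge \frac{\one^{\mathsf T}A_{12}\one}{\sqrt{ab}}=\frac{m_c}{\sqrt{ab}} .
\]
By \eqref{eq:ablhd}, $m_c=ab-c=ab-o(n^2)>0$ for large $n$, so $\sigma>0$.

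\emph{Reduction of the second inequality.} Apply \eqref{eq:Frob_2} with $Q=A_0$ and $R=A_{\mathrm{int}}$. Using \eqref{eq:psiEF}, this gives
\[
s^+\le m_c+2\ip{(A_0)_+}{A_{\mathrm{int}}}+2\ell .
\]
It therefore suffices to show $\ip{(A_0)_+}{A_{\mathrm{int}}}\le k_3^\times/\sigma+\sqrt{\ell c}$.

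Write $C_1=A_{12}A_{12}^{\mathsf T}$ and $C_2=A_{12}^{\mathsf T}A_{12}$. From the singular value decomposition of $A_{12}$ one gets
\[
(A_0)_+=\frac12\begin{pmatrix}C_1^{1/2}&A_{12}\\ A_{12}^{\mathsf T}&C_2^{1/2}\end{pmatrix}.
\]
Since $A_{\mathrm{int}}$ is block diagonal, only the diagonal blocks contribute:
\[
\ip{(A_0)_+}{A_{\mathrm{int}}}=\tfrac12\bigl(\ip{C_1^{1/2}}{A_1}+\ip{C_2^{1/2}}{A_2}\bigr).
\]

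\emph{Key splitting.} Decompose $C_k^{1/2}=\sigma^{-1}C_k+D_k$. The nonzero eigenvalues of $D_k$ are $s_i(1-s_i/\sigma)$, where $s_1=\sigma\ge s_2\ge\cdots$ are the singular values of $A_{12}$.

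The triangle term is exact. The entry $(C_1)_{uv}$ is the number of common neighbours of $u$ and $v$ in $V_2$. Hence $\ip{C_1}{A_1}$ counts each triangle with an edge in $V_1$ and apex in $V_2$ exactly twice, and symmetrically for $C_2$ and $A_2$. Triangles meeting both parts are precisely these two types, so
\[
\tfrac12\bigl(\ip{C_1}{A_1}+\ip{C_2}{A_2}\bigr)/\sigma = k_3^\times/\sigma .
\]

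\emph{Error term (the main obstacle).} The point is to bound $D_k$ by $c$ rather than by $m_c$. The $i=1$ eigenvalue of $D_k$ vanishes, and $(1-s_i/\sigma)^2\le1$. Using the first inequality,
\[
\norm{D_k}_F^2\le\sum_{i\ge2}s_i^2=m_c-\sigma^2\le m_c-\frac{m_c^2}{ab}=\frac{m_c\,c}{ab}\le c .
\]
Write $\ell_k=|E(G[V_k])|$, so $\norm{A_k}_F=\sqrt{2\ell_k}$. Cauchy--Schwarz in the Frobenius inner product, and then in $\mathbb R^2$, gives
\[
\tfrac12\bigl(\ip{D_1}{A_1}+\ip{D_2}{A_2}\bigr)\le\tfrac12\sqrt c\bigl(\sqrt{2\ell_1}+\sqrt{2\ell_2}\bigr)\le\tfrac12\sqrt c\cdot\sqrt2\cdot\sqrt{2\ell}=\sqrt{\ell c}.
\]
Combining this with the triangle term and the reduction above yields the stated bound on $s^+$.
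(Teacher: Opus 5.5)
Your proposal is correct and follows the paper's proof essentially step for step: the same test vectors for $\sigma$, the same application of \eqref{eq:Frob_2} with $Q=A_0$ and $R=A_{\mathrm{int}}$, the same splitting $C_k^{1/2}=\sigma^{-1}C_k+D_k$ (the paper's $R_1,R_2$), the same exact triangle identity, and the same bound $m_c-\sigma^2\le m_c c/(ab)\le c$. The only cosmetic difference is the order of Cauchy--Schwarz: you apply it to each block separately and then in $\mathbb R^2$, while the paper applies it once to the pair of blocks; both give the error term $\sqrt{\ell c}$.
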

\begin{proof}
Since $m_c=n^2/4-o(n^2)$ by
\eqref{eq:ablhd}, we have $\sigma>0$ for large $n$. Since $\max\{t,0\}=\frac{t+|t|}{2}$, using spectral decomposition, we get
\begin{align*}
 (A_0)_+&=\frac12\left(A_0+(A_0^2)^{1/2}\right)  \notag \\
 &=\frac12\left[\begin{pmatrix}
  0&A_{12}\\
  A_{12}^{\mathsf T}&0
 \end{pmatrix}+ \begin{pmatrix}
  (A_{12}A_{12}^{\mathsf T})^{1/2}&0\\
  0&(A_{12}^{\mathsf T}A_{12})^{1/2}
 \end{pmatrix} \right]  \notag \\
 &=\frac12
 \begin{pmatrix}
  (A_{12}A_{12}^{\mathsf T})^{1/2}&A_{12}\\
  A_{12}^{\mathsf T}&(A_{12}^{\mathsf T}A_{12})^{1/2}
 \end{pmatrix}.
\end{align*}
Write
\begin{equation*}
 (A_{12}A_{12}^{\mathsf T})^{1/2}=\frac{A_{12}A_{12}^{\mathsf T}}\sigma+R_1,
 \qquad
 (A_{12}^{\mathsf T}A_{12})^{1/2}=\frac{A_{12}^{\mathsf T}A_{12}}\sigma+R_2.
\end{equation*}
If the singular values of $A_{12}$ are
$\sigma=\sigma_1\geq\sigma_2\geq\cdots$, then the eigenvalues of $R_1$ corresponding to the left singular vectors are
\[
 \sigma_i-\frac{\sigma_i^2}{\sigma}
 =\sigma_i\left(1-\frac{\sigma_i}{\sigma}\right).
\]
The same calculations work for $R_2$ corresponding to right singular vectors. The term for $i=1$ is zero and
$|1-\sigma_i/\sigma|\leq1$. Then
\begin{equation}
 \norm{R_1}_F^2\leq \sum \limits_{i=2}^{\min \{a,b\}}\sigma_i^2=tr(A_{12}A_{12}^T)-\sigma^2=m_c-\sigma^2,
 \qquad
 \norm{R_2}_F^2\leq m_c-\sigma^2.
 \label{eq:R_1R_2_norm}
\end{equation}

The $k_3^\times$ triangles that meet both $V_1$ and $V_2$ have a unique internal edge.  Since an off-diagonal
entry of $A_{12}A_{12}^{\mathsf T}$ counts common neighbors in $V_2$, and similarly
for $A_{12}^{\mathsf T}A_{12}$, we have
\begin{equation*}
 \ip{A_{12}A_{12}^{\mathsf T}}{A_1}
 +\ip{A_{12}^{\mathsf T}A_{12}}{A_2}=2k_3^\times.
\end{equation*}
Furthermore, Cauchy--Schwarz and \eqref{eq:R_1R_2_norm} give
\begin{align*}
 \ip{R_1}{A_1}+\ip{R_2}{A_2}
 &\leq
 \sqrt{\bigl(\norm{R_1}_F^2+\norm{R_2}_F^2\bigr)
       \bigl(\norm {A_1}_F^2+\norm {A_2}_F^2\bigr)}\notag\\
 &\leq2\sqrt{\ell (m_c-\sigma^2)}.
\end{align*}
Using the normalized all-ones vectors and
Cauchy--Schwarz, we obtain
$$ \frac{m_c}{\sqrt{ab}}=\frac{\one_a^{\mathsf T}A_{12}\one_b}{\sqrt{ab}} \leq \norm{\frac{\one_a}{\sqrt{a}}}\norm{\frac{A_{12}\one_b}{\sqrt{b}}}\leq \sigma.$$
Consequently,
\begin{equation}
 m_c-\sigma^2
 \leq m_c-\frac{m_c^2}{ab}
 =\frac{m_c c}{ab}\leq c.
 \label{eq:m_c}
\end{equation}

Apply \eqref{eq:Frob_2} to $A_0+A_{\mathrm{int}}$.  Equations
\eqref{eq:psiEF}--\eqref{eq:m_c} yield
\begin{align*}
 s^+
 &\leq m_c+2\ip{(A_0)_+}{A_{\mathrm{int}}}+2\ell \notag\\
 &=m_c+\ip{(A_{12}A_{12}^{\mathsf T)^{1/2}}}{A_1}+\ip{(A_{12}^{\mathsf T}A_{12})^{1/2}}{A_2}+2\ell \notag\\
 &=m_c+\frac{1}{\sigma}\left(\ip{A_{12}A_{12}^{\mathsf T}}{A_1}+\ip{A_{12}^{\mathsf T}A_{12}}{A_2}\right)+\ip{R_1}{A_1}+\ip{R_2}{A_2}+2\ell \notag\\
 &=m_c+2\ell+\frac{2k_3^\times}{\sigma}
   +\ip{R_1}{A_1}+\ip{R_2}{A_2}\notag\\
 &\leq m_c+2\ell+\frac{2k_3^\times}{\sigma}+2\sqrt{\ell c}. \qedhere
\end{align*} 
\end{proof}

\begin{claim}\label{cl:lcd}
    We have $\ell \geq 1$ and $ c+\delta=O_\varepsilon(\ell+1).$
\end{claim}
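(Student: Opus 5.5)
We will combine the upper bound of Claim~\ref{cl:sigma_splus} with the strict hypothesis $s^+>n^2/4$ (for $r=2$, $(1-1/r)^2n^2=n^2/4$). Write $m_c=ab-c=n^2/4-\delta-c$. Substituting into Claim~\ref{cl:sigma_splus} gives
\[
 \frac{n^2}{4}<s^+\le \frac{n^2}{4}-\delta-c+2\ell+\frac{2k_3^\times}{\sigma}+2\sqrt{\ell c},
\]
that is,
\begin{equation*}
 \delta+c<2\ell+\frac{2k_3^\times}{\sigma}+2\sqrt{\ell c}.
\end{equation*}
Next we bound the triangle term. Trivially $k_3^\times\le k_3\le(\tfrac12-\varepsilon)n$ by \eqref{eq:se_contra_count} with $r=2$. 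By Claim~\ref{cl:sigma_splus}, $\sigma\ge m_c/\sqrt{ab}=\sqrt{ab}-c/\sqrt{ab}=n/2-o(n)$, using \eqref{eq:ablhd}. Hence $2k_3^\times/\sigma\le 2(\tfrac12-\varepsilon)n/(n/2-o(n))=2-4\varepsilon+o(1)<2$ for large $n$. In particular the triangle term is $O(1)$.

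We then absorb the square-root term. By AM--GM, $2\sqrt{\ell c}\le 2\ell+c/2$. This leaves
\[
 \delta+\tfrac12 c<4\ell+2-4\varepsilon+o(1),
\]
so $c+\delta\le 8\ell+4=O_\varepsilon(\ell+1)$ (indeed even $O(\ell+1)$ with an absolute constant).

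It remains to show $\ell\ge1$. If $\ell=0$, then $G$ is bipartite with parts $V_1,V_2$. Then $k_3^\times=0$, and the inequality reads $\delta+c<0$, contradicting $\delta,c\ge0$. Alternatively, a bipartite graph is $K_3$-free, so Theorem~\ref{thm:se_turan} gives $s^+\le n^2/4$ directly, contradicting \eqref{eq:se_strict_threshold}. So $\ell\ge1$.

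The only delicate point is ensuring the factor $1/\sigma$ turns the $O(n)$ triangle count into an $O(1)$ contribution. This needs the lower bound $\sigma\ge(1-o(1))n/2$, which comes from $m_c/\sqrt{ab}$ together with $c=o(n^2)$ and $ab=n^2/4-o(n^2)$. Everything else is direct substitution. The strictness of the final inequality is not needed except in the case $\ell=0$, where the right-hand side vanishes.
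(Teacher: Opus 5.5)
Your proposal is correct and follows the paper's proof essentially step for step. It uses the same substitution $m_c=n^2/4-\delta-c$ into Claim~\ref{cl:sigma_splus}, the same $O(1)$ bound on $2k_3^\times/\sigma$ via $\sigma\ge m_c/\sqrt{ab}$, and the same absorption $2\sqrt{\ell c}\le 2\ell+c/2$. The only differences are that you derive $\ell\ge1$ from the inequality itself with $\ell=k_3^\times=0$ (or from Theorem~\ref{thm:se_turan}) instead of from the symmetry of the bipartite spectrum, and you observe the constant can be taken absolute; both are valid.
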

\begin{proof}
Under the contradiction assumption \eqref{eq:se_contra_count},
\[
 k_3^\times\leq k_3\leq\left(\frac12-\varepsilon\right)n=O(n).
\]
Claim~\ref{cl:sigma_splus} gives $\sigma\ge m_c/\sqrt{ab}$.
Since $m_c=n^2/4-o(n^2)$ by \eqref{eq:ablhd} and
$\sqrt{ab}\le n/2$, we obtain
\[
 \sigma\geq\frac{m_c}{\sqrt{ab}}\geq \frac{2m_c}{n}=\frac{n}{2}-o(n)=\left(\frac12-o(1)\right)n.
\]
Thus $2k_3^\times/\sigma=O_\varepsilon(1)$.  Since
\[
 m_c=ab-c=\frac{n^2}{4}-\delta-c
\]
and $s^+>n^2/4$, Claim~\ref{cl:sigma_splus} implies
\begin{equation*}
 \delta+c<2\ell+2\sqrt{\ell c}+O_\varepsilon(1).
\end{equation*}
Using $2\sqrt{\ell c}\leq c/2+2\ell $, we obtain
\begin{align*} 
c+\delta &< 4\ell+c/2+O_\varepsilon(1) \\
\delta+c/2 &< 4\ell+O_\varepsilon(1) \\
\delta+c \leq 2\delta+c &< 8\ell+O_\varepsilon(1). 
\end{align*}
Choosing a sufficiently large constant, which depends on $\varepsilon$, we obtain
\begin{equation*}
 c+\delta=O_\varepsilon(\ell+1).
\end{equation*}
Moreover $\ell \geq 1$.
Indeed, if $\ell=0$, then $G$ is bipartite, so its eigenvalues occur in
opposite pairs and
\begin{equation*}
 s^+=m=m_c\leq ab\leq\frac{n^2}{4},
\end{equation*}
contrary to \eqref{eq:se_strict_threshold}.
\end{proof}

\begin{claim}\label{cl:lcd:bigO}
    We have 
    \begin{equation}
 \ell=O_\varepsilon(1),
 \label{eq:ell_bigO}
\end{equation}
and 
\begin{equation*}
 c,\delta=O_\varepsilon(1).
\end{equation*}
\end{claim}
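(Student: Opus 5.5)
The plan is to prove $\ell=O_\varepsilon(1)$ by a direct triangle count around the internal edges. Claim~\ref{cl:lcd} then immediately gives $c,\delta=O_\varepsilon(1)$. The input from Claim~\ref{cl:lcd} is the linear bound $c=O_\varepsilon(\ell+1)$. This is exactly the estimate that lets missing crossing edges destroy only a negligible fraction of the candidate triangles.

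\textbf{Step 1: $D=\max_v d_I(v)=o(n)$ also for $r=2$.} I will rerun the argument of Claim~\ref{cl:d_small}, which specialises cleanly when $r=2$.
Suppose that $d_I(v)\ge\eta n$ for some $v\in V_1$, say. All but $o(n)$ vertices $u$ satisfy $d_M(u)\le\gamma_n n=o(n)$, since $\sum_u d_M(u)=2c=o(n^2)$. Hence $v$ has $\Omega(n)$ internal neighbours $u$ of this kind.
For each such $u$, the maximum-cut condition \eqref{eq:local_maxcut} gives $d_G(v,V_2)\ge d_I(v)\ge\eta n$. Therefore
\[
|N(u)\cap N(v)\cap V_2|\ge\eta n-o(n).
\]
Each such common neighbour, together with $u$ and $v$, spans a triangle whose unique internal edge is $uv$. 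The triangles obtained from different $u$ are therefore distinct. This yields $\Omega(n^2)$ triangles, contradicting $k_3\le(\frac12-\varepsilon)n$.

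\textbf{Step 2: triangle count over internal edges.} For an internal edge $uv$ inside $V_1$, every vertex of $V_2$ that is adjacent to both $u$ and $v$ forms a triangle with them. Hence the edge lies in at least $b-d_M(u)-d_M(v)$ crossing triangles, and symmetrically with $a$ for edges inside $V_2$. Each crossing triangle has exactly one internal edge, so summing over internal edges gives
\[
k_3\ge k_3^\times\ge \ell\,\min\{a,b\}-\sum_{uv\ \text{internal}}\bigl(d_M(u)+d_M(v)\bigr).
\]
The subtracted sum equals $\sum_u d_I(u)\,d_M(u)\le D\sum_u d_M(u)=2Dc$. By Step 1 and Claim~\ref{cl:lcd}, $2Dc=o(n)\cdot O_\varepsilon(\ell+1)$. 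Since $a,b=n/2+o(n)$ by \eqref{eq:ablhd}, we get
\[
k_3\ge \ell\Bigl(\tfrac n2-o(n)\Bigr)-o(n)(\ell+1).
\]

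\textbf{Step 3: conclude.} Combining Step 2 with \eqref{eq:se_contra_count}, which for $r=2$ reads $k_3\le(\frac12-\varepsilon)n$, gives
\[
\ell\Bigl(\tfrac12-o(1)\Bigr)\le \tfrac12-\varepsilon+o(1).
\]
Hence $\ell\le 1$ for large $n$ (together with $\ell\ge1$, in fact $\ell=1$). In particular $\ell=O_\varepsilon(1)$, and Claim~\ref{cl:lcd} then gives $c,\delta=O_\varepsilon(\ell+1)=O_\varepsilon(1)$.

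The main point needing care is Step 2's error term. A crude bound $c=o(n^2)$ would be useless there, so the argument genuinely needs both $D=o(n)$ and the linear bound $c=O_\varepsilon(\ell+1)$ from Claim~\ref{cl:lcd}. It is their product that makes the loss $o(n)(\ell+1)$ rather than something comparable to $\ell n$.
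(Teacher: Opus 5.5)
Your proof is correct, but for this claim it takes a different route from the paper.

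\textbf{The paper's route.} It splits internal edges into light ones ($d_M(u)+d_M(v)\le\theta n$) and heavy ones. There is at most one light edge, because each lies in at least $n/4$ crossing triangles. Every heavy edge touches $\mathcal S=\{v:d_M(v)>\theta n/2\}$, a set of size $O_\varepsilon((\ell+1)/n)$ by Claim~\ref{cl:lcd}. If $\ell\to\infty$, pigeonhole gives a vertex with $d_I(v)=\Omega_\varepsilon(n)$, which lies in at least $d_I(v)^2-c=\Omega(n^2)$ triangles. Only after the claim, using $c,\delta=O_\varepsilon(1)$, does the paper show that a single internal edge lies in $n/2-O_\varepsilon(1)$ triangles.

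\textbf{Your route.} You transplant the $r\ge3$ argument. First you prove $D=o(n)$ as in Claim~\ref{cl:d_small}. This is valid for $r=2$, and even simpler there: $d_I(v)\ge\eta n$ directly gives $k_3\ge d_I(v)d_c(v)-c\ge\eta^2n^2-o(n^2)$. Then you run the canonical-candidate count with loss $\sum_u d_I(u)d_M(u)\le 2Dc$. Here Claim~\ref{cl:lcd} supplies the linear bound on $c$ that Claim~\ref{cl:ell_c_bounds} supplies for $r\ge3$.

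\textbf{What your route buys.} It shows that the only genuinely triangle-specific input is the block-matrix bound $c=O_\varepsilon(\ell+1)$. It also gives more than you state. Your Step~3 inequality yields $\ell\le 1-2\varepsilon+o(1)<1$, which together with $\ell\ge1$ is already the final contradiction for $r=2$. So the paper's closing paragraph and the bounds on $c,\delta$ become unnecessary; ``in fact $\ell=1$'' undersells this.

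\textbf{What the paper's route buys.} It establishes the structural picture promised in its overview: $\ell$, $c$ and the part-size imbalance are all bounded independently of $n$. It also needs the high-internal-degree contradiction only for one pigeonholed vertex, rather than as a uniform bound on $D$.
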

\begin{proof}
Recall that
\begin{equation}
 \sum_vd_M(v)=2c.
 \label{eq:sum_d_M}
\end{equation}
Fix a constant $\theta$ with $0<\theta<1/4$.  Call an internal edge $uv$
\emph{light} if
\[
 d_M(u)+d_M(v)\leq\theta n.
\]
Every light internal edge has at least
\[
 \min\{a,b\}-\theta n=\frac{a+b-|a-b|}{2}-\theta n=\frac n2 -\sqrt{\delta}-\theta n=\Omega_\theta(n)   \quad (\text{since } \delta=o(n^2) )
\]
common neighbors in the other part. Thus, for sufficiently large $n$, a light internal edge can be extended to at least $n/4$ triangles meeting both $V_1$ and $V_2$. Since every such triangle has a unique internal edge, the presence of another light internal edge would give at least another $n/4$ distinct triangles in $G$. Thus in that case $G$ would have at least $n/2$ triangles, contradicting $k_3(G) \leq \left( 1/2 -\varepsilon \right)n< n/2$  by \eqref{eq:se_contra_count}. Thus there is at most one light internal edge.

Every non-light internal edge has an endpoint in
\[
 \mathcal S=\{v:d_M(v)>\theta n/2\}.
\]
Equation \eqref{eq:sum_d_M} gives
$$\frac{\theta n}{2}|\mathcal{S}|\leq \sum \limits_{v \in \mathcal{S}} d_M(v) \leq \sum \limits_{v \in V(G)} d_M(v) =2c.$$
By Claim~\ref{cl:lcd} and $\delta\ge0$, we have
$c=O_\varepsilon(\ell+1)$. Hence
\begin{equation}
 |\mathcal S|
 \leq\frac{4c}{\theta n}
 =O_\varepsilon\left(\frac{\ell+1}{n}\right).
 \label{eq:S_size}
\end{equation}
Since there are at least $\ell-1$ non-light internal edges and each of them has an endpoint in $\mathcal S$, we obtain
\begin{equation}
 \ell-1\leq\sum_{v\in\mathcal S}d_I(v).
 \label{eq:S_degree_sum}
\end{equation}

We claim that $\ell=O_{\varepsilon}(1)$. Suppose on the contrary that $\ell$ is not $O_{\varepsilon}(1)$. Passing to a subsequence we may assume that $\ell \rightarrow \infty$. This forces that eventually, $\ell \geq 2$ and hence by \eqref{eq:S_degree_sum}, $\mathcal S$ is nonempty for sufficiently large $n$ along this subsequence. Thus equations
\eqref{eq:S_size}--\eqref{eq:S_degree_sum} yield a vertex
$v\in\mathcal S$ such that
\begin{equation}
 d_I(v)\geq
 \frac{\ell-1}{|\mathcal S|}=\Omega_\varepsilon(n).
 \label{eq:Ideg_lower}
\end{equation}
Since $V_1\sqcup V_2$ is a maximum cut, the crossing degree of $v$ satisfies
\begin{equation}
 d_c(v)\geq d_I(v).
 \label{eq:cdeg_lower}
\end{equation}

Every edge between $N_I(v)$ and $N_c(v)$ gives a triangle containing $v$ and all the triangles formed in this way are different. Therefore using \eqref{eq:cdeg_lower},
\begin{align*}
k_3 &\geq e(N_I(v), N_{c}(v))  \\
  &\geq |N_I(v)| ~ |N_{c}(v)|-c \\
  &=d_I(v) d_{c}(v)-c \\
  &\geq d_I(v)^2 -c.
\end{align*}
Using \eqref{eq:Ideg_lower} and \eqref{eq:c_small} we get $k_3 = \Omega_{\varepsilon}(n^2)$,
contrary to $k_3=O(n)$.  Thus \eqref{eq:ell_bigO} holds.
 
Note that, since $c, \delta \geq 0$, Claim~\ref{cl:lcd} and \eqref{eq:ell_bigO} give
\begin{equation*}
 c,\delta=O_\varepsilon(1). \qedhere
\end{equation*}\end{proof}
Choose any internal edge $uv$, which exists by
Claim~\ref{cl:lcd}.  The sets of missing crossing edges incident
with $u$ and $v$ are disjoint as edge sets and together contain
$d_M(u)+d_M(v)$ edges.  Hence $d_M(u)+d_M(v)\leq c$.  The edge $uv$ therefore
has at least
\[
\min \{a,b\}- (d_M(u)+d_M(v)) \geq \min\{a,b\}-c
\]
common neighbors in the other part. By Claim~\ref{cl:lcd:bigO}, both $\delta$ and $c$ are
$O_\varepsilon(1)$. Therefore
$$\min\{a,b\}
=\frac n2-\sqrt{\delta}
=\frac n2-O_\varepsilon(1),$$
and consequently
$$
k_3\ge \min\{a,b\}-c
\ge \frac n2-O_\varepsilon(1).
$$
For sufficiently large $n$, this contradicts
$k_3\leq(1/2-\varepsilon)n$.  This proves
Theorem \ref{thm:main_square_energy} for $r=2$.

\section{Sharpness of the coefficient}\label{sec:se_sharpness}

For each integer $\nu \geq 2$, let $n=r\nu$, start with the balanced Tur\'{a}n graph $T_r(n)$, and add one
edge inside one of its parts.  The resulting graph has
\[
 m=t_r(n)+1=\frac{\left(1-\frac{1}{r}\right)n^2}{2}+1.
\]
By the Rayleigh quotient of the all-ones vector,
\[
 \lambda_1\geq\frac{2m}{n}=\left(1-\frac{1}{r}\right)n+\frac2n.
\]
Since $\sqrt{s^+}\geq\lambda_1$, this graph satisfies
\[
 \sqrt{s^+}>\left(1-\frac{1}{r}\right)n.
\]
Every copy of $K_{r+1}$ must use the added edge and one vertex from each
of the other $r-1$ parts.  Therefore
\[
 k_{r+1}=\nu^{r-1}=\frac{n^{r-1}}{r^{r-1}}.
\]
Thus no coefficient larger than $r^{-(r-1)}$ can replace the coefficient
in Theorem \ref{thm:main_square_energy}.  This completes the proof of
Theorem \ref{thm:main_square_energy}.

\section*{Acknowledgments}
Abhay Jayarajan acknowledges support from the DST INSPIRE Fellowship (IF220692). M.~Rajesh Kannan acknowledges financial support from ANRF-CRG, India (File No.\ CRG/2023/002747). 	

\section*{Declaration of AI use}
The authors acknowledge the use of ChatGPT (GPT-6 Astra, OpenAI; accessed September 2026) for improving the exposition of the manuscript and refining some of the proofs. All mathematical statements have been independently verified by the authors, who are solely responsible for the correctness of the results presented in this paper.

\section*{Data Availability:} No data were generated or analysed in this study, as the work is entirely theoretical. 

\section*{Conflict of interest:} 
The authors state that they possess no conflicts of interest.

\begingroup
\small
\setlength{\parskip}{3pt}
\bibliographystyle{plain}
\bibliography{P1}
\endgroup

\vspace{0.4cm}

\affl{Abhay Jayarajan}{ma23resch02001@iith.ac.in, abhayjayarajan@gmail.com}{ Department of Mathematics, Indian Institute of Technology Hyderabad, Kandi, Sangareddy 502284, India.}

\affl{M. Rajesh Kannan}{rajeshkannan@math.iith.ac.in, rajeshkannan1.m@gmail.com}{Department of Mathematics, Indian Institute of Technology Hyderabad, Sangareddy 502285, India}

\affl{Shivaramakrishna Pragada}{shivaramakrishna\_pragada@sfu.ca, shivaramkratos@gmail.com}{Department of Mathematics, Simon Fraser University, Burnaby, Canada}

\end{document}